\newtheorem{theorem}{Theorem}[subsection]
\newtheorem{lemma}[theorem]{Lemma}
\newtheorem{corollary}[theorem]{Corollary}
\newtheorem{proposition}[theorem]{Proposition}
\theoremstyle{definition}
\newtheorem{remark}[theorem]{Remark}
\newtheorem{example}[theorem]{Example}
\newtheorem{question}[theorem]{Question}
\numberwithin{equation}{section}
\numberwithin{theorem}{section}
\DeclareMathOperator{\Aut}{Aut}
\DeclareMathOperator{\tr}{\mathrm{Tr}}
\DeclareMathOperator{\alg}{\mathrm{alg}}
\newcommand{\ep}{\varepsilon}
\newcommand*{\CC}{\mathbb C}
\newcommand*{\ZZ}{\mathbb Z}
\newcommand*{\RR}{\mathbb R}
\newcommand*{\NN}{\mathbb N}
\newcommand*{\QQ}{\mathbb Q}
\newcommand*{\TT}{\mathbb T}
\newcommand*{\id}{\textup{id}}
\newcommand*{\Ad}{\textup{Ad} \, }
\newcommand{\Cs}{$C^*$-al\-ge\-bra}
\newcommand*{\into}{\hookrightarrow}
\newcommand{\SL}{\operatorname{SL}}
\newcommand{\Hom}{\operatorname{Hom}}
\title{Inclusions of $C^*$-algebras arising from fixed-point algebras}
\author{Siegfried Echterhoff and Mikael R\o rdam}
\thanks{SE funded by Deutsche Forschungsgemeinschaft (DFG, German Research Foundation) Project-ID 427320536 SFB 1442 and under Germany's Excellence Strategy EXC 2044  390685587, Mathematics M\"{u}nster: Dynamics, Geometry, Structure. 
MR supported by a research grant from the Danish Council for Independent Research, Natural Sciences.}
\keywords{Irreducible inclusion of $C^*$-algebras, crossed product, fixed-point algebra, irrational rotation algebra}
\subjclass[2020]{46L05; 46L35; 46L55}
\address[echters@uni-muenster.de]{Siegfried Echterhoff,
Mathematisches Institut, Westf\"{a}lische Wilhelm-Universit\"{a}t M\"{u}nster, Einsteinstr. 62, 48149 Münster, Germany}
\address[rordam@math.ku.dk]{Mikael Rørdam, Department of Mathematical Sciences, University of Copenhagen, 
Universitetsparken 5, 2100 Copenhagen, Denmark}
\begin{document}

\begin{abstract}
We examine inclusions of $C^*$-algebras of the form $A^H \subseteq A \rtimes_{r} G$, where $G$ and $H$ are groups acting on a unital simple \Cs{} $A$ by outer automorphisms and $H$ is finite. It follows from a theorem of Izumi that $A^H \subseteq A$ is $C^*$-irreducible, in the sense that all intermediate \Cs s are simple. We show that $A^H \subseteq  A \rtimes_{r} G$ is $C^*$-irreducible for all $G$ and $H$ as above if and only if $G$ and $H$ have trivial intersection in the outer automorphisms of $A$, and we give a Galois type classification of all intermediate \Cs s in the case when $H$ is abelian and the two actions of $G$ and $H$ on $A$ commute.

We illustrate these results with examples of outer group actions on the irrational rotation \Cs s. We exhibit, among other examples, $C^*$-irreducible inclusions of AF-algebras that have intermediate \Cs s that are not AF-algebras, in fact, the irrational rotation \Cs{} appears as an intermediate \Cs.
\end{abstract}
%
%

\maketitle

\section{Introduction}

\noindent Inclusions of unital simple \Cs s with the property that all intermediate \Cs s are simple were characterized and labelled $C^*$-irreducible in the recent paper \cite{Rordam} by the second named author. A well-known and classic result of Kishimoto, \cite{Kishimoto}, states that whenever a group $G$ acts by outer automorphisms on a simple \Cs{} $A$, then the reduced crossed product $A \rtimes_r G$ is simple as well. It follows easily from the proof of this theorem that the inclusion $A \subseteq A \rtimes_r G$ is $C^*$-irreducible, when $A$ in addition is unital, cf.\ \cite[Theorem 5.8]{Rordam}.   Moreover,  Izumi, \cite[Corollary 6.6]{Izumi},  in the case of finite $G$, and Cameron and Smith, \cite[Theorem 3.5]{Cameron-Smith}, in the general case established a Galois correspondence between intermediate \Cs s $A\subseteq D\subseteq A\rtimes_rG$ and subgroups $L$ of $G$, via $L \mapsto D =  A\rtimes_r L$.

 It was observed by Rosenberg, \cite{Rosenberg}, that if $H$ is any finite group acting (outer or not) on any \Cs{} $A$, then $A^H$ is isomorphic to a hereditary sub-\Cs{} of $A \rtimes H$. In particular, if $A$ is simple and the action of $H$ on $A$ is by outer automorphisms, then $A^H$ is simple. A result of Izumi, \cite[Corollary 6.6]{Izumi}, shows that the inclusion $A^H\subseteq A$ then is $C^*$-irreducible and that 
 all intermediate algebras are of the form $A^H\subseteq A^L\subseteq A$ for subgroups $L$ of $H$. This  mirrors the situation of crossed products by finite groups, and Izumi indeed directly relates the fixed-point algebra inclusion to the corresponding crossed-product inclusion via a version of Jones basic construction (see \cite[Corollary 3.12]{Izumi}).

Bisch and Haagerup considered in their paper \cite{BH1996} subfactors of the form $P^H \subseteq P \rtimes G$ arising from outer actions of two finite groups $H$ and $G$ on a II$_1$-factor $P$. They show that certain properties of the resulting subfactors (finite depth, respectively, amenability) are precisely mirrored by properties of the subgroup of $\mathrm{Out}(P)$ generated by $H$ and $G$.  They also show that the inclusion $P^H \subseteq P \rtimes G$ is irreducible if and only if $G$ and $H$ intersect trivially in $\mathrm{Out}(P)$.

%

Specifially, as stated in the abstract, we prove in this paper that if $\alpha$ and $\beta$ are actions of groups $G$ and $H$  on a unital simple \Cs{} $A$, and if $H$ is finite, then the inclusion $A^H \subseteq A \rtimes_r G$ is $C^*$-irreducible if and only if $\alpha_s \circ \beta_t$ is outer for all $(s,t) \in G \times H$ with $(s,t) \ne (e_G,e_H)$. This condition is an exact translation to the realm of \Cs s of the Bisch-Haagerup condition ensuring irreducibility in the subfactor case.  In the case where $H$ is abelian and the two actions $\alpha$ and $\beta$ commute, we further establish a Galois correspondence between intermediate \Cs s of the inclusion $A^H \subseteq A \rtimes_r G$  and subgroups of $\widehat{H}\times G$, where $\widehat{H}$ denotes the Pontryagin dual of $H$. Clearly, $A$ itself is an intermediate \Cs{} of this inclusion.

%
 
 We apply our  results to some well-known outer actions of finite and infinite cyclic groups on the irrational rotation \Cs{} $A_\theta$. There is a canonical (outer) action of the group $\SL(2,\ZZ)$ on $A_\theta$. It is known that $\ZZ_2$, 
$\ZZ_3$, $\ZZ_4$ and $\ZZ_6$ are finite cyclic subgroups of $\SL(2,\ZZ)$, and in fact the only ones, up to conjugacy. The corresponding actions of these finite cyclic groups on $A_\theta$ were studied in \cite{ELPW}, and it was shown  therein, that the fixed-point algebra and the crossed product of $A_\theta$ by each of these groups gives rise to a simple AF-algebra. We use this, and our main result stated above, to show that when $F_1$ and $F_2$ are (certain) combinations of the groups  $\ZZ_2$, 
$\ZZ_3$ and $\ZZ_4$, then $A_\theta^{F_1} \subseteq A_\theta \rtimes F_2$ is a $C^*$-irreducible inclusion of simple AF-algebras admitting a non-AF intermediate \Cs, namely $A_\theta$. This answers in the negative Question 6.11 from \cite{Rordam}. We also study several interesting examples of $C^*$-irreducible inclusions which involve  actions of the integer group $\ZZ$. 

The paper is organized as follows. In Section~\ref{sec:outer} we collect some well-known and some new results about outer actions of groups on \Cs s. In Section~\ref{sec:fixed} we prove our main result on $C^*$-irreducibility of  inclusions of the form $A^H \subseteq A \rtimes_r G$, and in Section~\ref{sec-Galois} we establish the Galois correspondence for the intermediate subalgebras of these inclusions (under the assumptions stated above). Finally,  in Section~\ref{sec:examples} we provide examples of our main results relating to actions on the irrational rotation \Cs s. 

{\bf Aknowledgement:} The authors would like to thank Masaki Izumi and Stefaan Vaes for very helpful comments which helped to substantially improve the paper. In particular we would like to thank Masaki Izumi for pointing out to us the result \cite[Corollary 6.6]{Izumi} and a modification of 
our Lemma \ref{lm:OP} which resulted in a major improvement of our main results.

\section{Outer actions on fixed-point algebras} \label{sec:outer}

\noindent In this section we derive some preliminary results on outer actions of a discrete group $G$  on a \Cs{} $A$. 
The \Cs{} $A$ may or may not be unital, and if it is not unital we shall consider its multiplier algebra $M(A)$. 
For a unital \Cs{} $A$ we let $U(A)$ denote its group of unitary elements.

We shall repeatedly use the classic result by Kishimoto from \cite[Theorem 3.1]{Kishimoto} mentioned in the introduction that if $\alpha \colon G \to \Aut(A)$ is an action  of a discrete group $G$ by  outer automorphisms on a simple $C^*$-algebra $A$, then the reduced crossed product $A\rtimes_{\alpha,r}G$  is simple as well.  We shall often write $A\rtimes_\alpha G$ instead of $A\rtimes_{\alpha,r}G$ if $G$ is known to be amenable (in particular, if $G$ is abelian  or finite), since then the full and reduced crossed products coincide.  Also, we may write $A\rtimes_rG$ instead of $A\rtimes_{\alpha,r}G$ if the action $\alpha$ is understood. Recall that if $G$ is discrete there is always a canonical inclusion $A\subseteq A\rtimes_{\alpha,r}G$ together with  a canonical unitary representation 
$u \colon G\to UM(A\rtimes_{\alpha,r}G)$ implementing the action $\alpha$, i.e., $\alpha_g= \Ad u_g$, for $g \in G$. The {\em algebraic crossed product}
$$A\rtimes_{\alpha,\alg}G:=\Big\{\sum_{g\in G} a_gu_g : a_g\in A, a_g=0\;\text{for all but finitely many $g$}\Big\}$$
becomes a dense subalgebra of $A\rtimes_{\alpha,r}G$, and the two algebras coincide  if $G$ is finite.

Recall that an action $\alpha$ is {\em outer} if no $\alpha_g$ is inner, for $g \ne e$, that is $\alpha_g\neq \Ad v$ for all unitaries $v\in M(A)$. On the other extreme, if the action $\alpha \colon G\to\Aut(A)$ is implemented 
by a unitary representation $v \colon G\to UM(A)$ such that $\alpha_g=\Ad v_g$, for all $g\in G$, we have 
$$A\rtimes_{\alpha,r} G\cong A\rtimes_{\id, r}G\cong A\otimes C_r^*(G)$$
where the first isomorphism is the extension of the map 
$$A\rtimes_{\alpha,\alg}G\to A\rtimes_{\id,\alg}G\colon a_gu_g\mapsto (a_gv_g)u_g.$$
We use these results to prove

\begin{lemma}\label{lem-outer}
Let $\alpha \colon G\to\Aut(A)$ be an action of a discrete group on a simple $C^*$-algebra $A$. 
 Then the following are equivalent:
\begin{enumerate}
\item The action $\alpha$ is outer.
\item For all subgroups $H$ of $G$, the crossed product $A\rtimes_{\alpha,r}H$ is simple.
\item For all (finite or infinite) cyclic subgroups $C_g:=\langle g\rangle$ of $G$, the crossed product $A\rtimes_{\alpha}C_g$ is simple.
\end{enumerate}
\end{lemma}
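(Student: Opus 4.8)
The plan is to prove the cyclic implications $(1)\Rightarrow(2)\Rightarrow(3)\Rightarrow(1)$, since $(2)\Rightarrow(3)$ is immediate (cyclic subgroups are a special case of subgroups). For $(1)\Rightarrow(2)$, I would argue that if $\alpha$ is outer as an action of $G$, then for any subgroup $H\le G$ the restricted action $\alpha|_H$ is again outer: indeed, if some $\alpha_h$ for $h\in H\setminus\{e\}$ were inner, it would already witness that $\alpha$ fails to be outer as a $G$\nobreakdash-action. Once $\alpha|_H$ is outer, Kishimoto's theorem (\cite{Kishimoto}*{Theorem 3.1}) applied to the simple \Cs{} $A$ with the outer $H$\nobreakdash-action yields that $A\rtimes_{\alpha,r}H$ is simple. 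This handles (2) uniformly, and (3) follows by restricting to the cyclic subgroups $C_g$.

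The substance of the lemma lies in the converse $(3)\Rightarrow(1)$, which I would prove by contraposition. Suppose $\alpha$ is \emph{not} outer, so there exists $g\neq e$ and a unitary $u\in M(A)$ with $\alpha_g=\Ad u$. The idea is that along the cyclic subgroup $C_g=\langle g\rangle$ the action is (at least on the subgroup generated by $g$) implemented by a unitary, and then the crossed product $A\rtimes_{\alpha,r}C_g$ should fail to be simple because the presence of an inner automorphism produces a nontrivial central-type element or a proper ideal. Concretely, I would distinguish the two cases according to whether $g$ has finite or infinite order. In the infinite-order case, $\alpha|_{C_g}$ is implemented by the unitary representation $n\mapsto u^n$, so by the remark preceding the lemma we get $A\rtimes_{\alpha,r}C_g\cong A\otimes C^*_r(\ZZ)\cong A\otimes \cont(\TT)$, which is visibly non-simple (tensoring a simple algebra with $\cont(\TT)$ produces many ideals, \eg{} kernels of point evaluations on $\TT$).

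The finite-order case is the main obstacle and requires more care. If $g$ has order $n$, then $\alpha_g=\Ad u$ gives $\alpha_{g^n}=\id=\Ad u^n$, so $u^n$ is a central unitary; since $A$ is simple, $u^n=\lambda\cdot 1$ for some scalar $\lambda\in\TT$ (this uses that the center of $M(A)$ is trivial when $A$ is simple and unital, or an appropriate multiplier-algebra version otherwise). Rescaling $u$ by an $n$\nobreakdash-th root of $\lambda$, I may arrange $u^n=1$, so that $n\mapsto u^n$ is a genuine unitary representation of $C_g\cong\ZZ_n$ implementing $\alpha|_{C_g}$. Again by the remark preceding the lemma, $A\rtimes_{\alpha,r}C_g\cong A\otimes C^*_r(\ZZ_n)\cong A\otimes\CC^n$, which is non-simple since $n\geq 2$. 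I expect the only delicate point to be the passage $u^n=\lambda\cdot 1$ when $A$ is nonunital: there one works in $M(A)$ and invokes that the center of the multiplier algebra of a simple \Cs{} is $\CC\cdot 1$, together with the fact that an $n$\nobreakdash-th root of $\lambda$ can be chosen to renormalize $u$. With $\alpha|_{C_g}$ unitarily implemented, the tensor-product description makes non-simplicity transparent, completing the contrapositive and hence the equivalence.
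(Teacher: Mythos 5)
Your proposal is correct and follows essentially the same route as the paper: restriction of an outer action to subgroups plus Kishimoto's theorem for (i)\,$\Rightarrow$\,(ii), and a contrapositive case split on the order of $g$ for (iii)\,$\Rightarrow$\,(i), identifying $A\rtimes_{\alpha,r}C_g$ with $A\otimes C(\TT)$ respectively $A\otimes\CC^n$ after renormalizing $u$, exactly as the paper does via $A'\cap M(A)=\CC$. The only cosmetic point is the rescaling: to arrange $(\mu u)^n=1$ from $u^n=\lambda 1$ you should take $\mu$ an $n$th root of $\bar\lambda$ (equivalently, divide $u$ by an $n$th root of $\lambda$), which is what the paper does with $\eta^m=\bar\omega$.
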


\begin{proof} The implication (i) $\Rightarrow$ (ii) is a direct consequence 
 Kishimoto's theorem, since outerness of $\alpha$ implies outerness of the restriction of $\alpha$ to any subgroup of $G$. 
 The implication (ii) $\Rightarrow$ (iii) is trivial. Thus it suffices to prove (iii) $\Rightarrow$ (i). 

So assume that (iii) holds for all $g\in G$. If $\alpha$ is not outer,
there exists an element $e\neq g\in G$ such that $\alpha_g(a)=\Ad u(a)=uau^*$ for some unitary element 
$u\in M(A)$. Let $C_g$ be the cyclic subgroup of $G$ generated by $g$. 
Suppose first that $g$ has infinite order.
Since $\alpha_{g^n}=\Ad u^n$ for all $n\in \ZZ$, it follows that the restriction of 
$\alpha$ to $C_g\cong \ZZ$ is implemented by the unitary representation $n\mapsto u^n\in UM(A)$, and hence we get
$$A\rtimes_\alpha C_g\cong A\otimes C^*(C_g)\cong A\otimes C^*(\ZZ)\cong A\otimes C(\TT),$$
which is certainly not simple.

On the other hand, if $C_g$ is cyclic of order $m\in \NN$, then $\Ad u^m=\alpha_e=\id_A$. It follows from simplicty of $A$ that $A' \cap M(A) = \CC$,  so there must exists
 $\omega \in \TT$ such that $u^m=\omega1$. Now, if $\eta \in \TT$ is an $m$th root of $\bar{\omega}$, we see that 
$g^k\mapsto (\eta u)^k\in UM(A)$ implements a homomorphism $\tilde u \colon C_g\to UM(A)$ such that $\alpha|_{C_g}=\Ad \tilde u$, and hence
$$A\rtimes_\alpha C_g\cong A\otimes C^*(C_g)\cong A\otimes \CC^m,$$
which is not simple.
\end{proof}

 \begin{remark}\label{rem-outer}
 In general, outerness for an action  $\alpha \colon G\to\Aut(A)$ on a simple $C^*$-algebra 
$A$  (unital or not) is not equivalent to $A\rtimes_{\alpha,r}G$ being simple, even if $G$ is finite and abelian and $A$ is simple and unital. 
To construct a counterexample, let $H$ be any finite abelian group.
Let $G:=H\times \widehat{H}$ be the direct  product of $H$ with its dual group $\widehat{H}$.
For each pair $(g,x)\in H\times \widehat{H}$ let $V_{(g,x)}$ be the unitary operator on $\ell^2(H)$ 
defined by
$$\big(V_{(g,x)}\xi\big)(h)=\overline{\langle h, x\rangle}\xi(g^{-1}h),$$
where $\langle \, \cdot \, , \, \cdot \, \rangle \colon H\times \widehat{H}\to\TT$ denotes the canonical pairing between $H$ and $\widehat{H}$.
A short computation then shows that $V \colon H\times \widehat{H}\to U(\ell^2(H))$ is a projective representation 
such that
$$V_{(g_1,x_1)}V_{(g_2,x_2)}=\langle g_1, x_2\rangle V_{(g_1g_2,x_1x_2)},$$
for all $(g_1,x_1),(g_2,x_2)\in H\times \widehat{H}$. Thus, $V$ is an $\omega$-representation of the 
Heisenberg-type $2$-cocycle $\omega \colon H\times \widehat{H}\to \TT$ defined by
$\omega\big((g_1,x_1), (g_2,x_2)\big) =\langle g_1, x_2\rangle$. Let $C^*(H\times\widehat{H},\omega)$ denote the twisted group algebra 
of $H\times \widehat{H}$ with respect to the cocycle $\omega$ (see, e.g.,  \cite[Section 2.8.6]{CELY} for the construction).
Since $\omega$ is totally skew in the sense of \cite[p.~300]{Baggett-Kleppner} it follows from 
\cite[Theorem 3.3]{Baggett-Kleppner} that $V$ is the unique irreducible $\omega$-representation of $H\times\widehat{H}$,
which then implements an isomorphism $C^*(H\times\widehat{H},\omega)\cong B(\ell^2(H))\cong M_{|H|}(\CC)$. 

Now let $A:=B(\ell^2(H))$ and define $\beta \colon H\times \widehat{H} \to \Aut(A)$ by
$\beta_{(g,x)}=\Ad V^*_{(g,x)}$.
Then  one  checks that $A\otimes C^*(H\times \widehat{H},\omega)$ is isomorphic to $A\rtimes_\beta(H\times\widehat{H})$
via the map $a \otimes \delta_{(g,x)}\mapsto aV_{(g,x)}u_{(g,x)}$ (see, e.g., \cite[Remark 2.8.18]{CELY}). 
Thus  $\beta$ is an action by inner automorphisms on the simple unital \Cs{} $A = M_{|H|}(\CC)$ for which
$A\rtimes_\beta (H\times\widehat{H})\cong M_{|H|}(\CC)\otimes M_{|H|}(\CC)$ is simple.
\end{remark}

\section{$C^*$-irreducible inclusions arising from fixed-point algebras into crossed products} \label{sec:fixed}

\noindent We shall here prove our main results regarding $C^*$-irreducibility of inclusions arising from fixed-point algebras into crossed products.  Let
 $H$ be a finite group and let $\beta \colon H\to \Aut(A)$ be an action of $H$ on the $C^*$-algebra $A$. 
Let $$A^{H,\beta}:=\{a\in A: \beta_h(a)=a\; \text{for all} \;  h\in H\}$$ (or simply $A^H$ if  confusion seems unlikely)
be the fixed-point algebra of $\beta$. Consider the projection 
\begin{equation} \label{eq:p-beta}
p^\beta:=\frac{1}{|H|}\sum_{h\in H}u_h\in M(A\rtimes_\beta H),
\end{equation}
where $u \colon H\to UM(A\rtimes_\beta H)$ denotes the canonical unitary representation  which implements $\beta$ in the crossed-product.
Note that $p^\beta$ commutes with $A^H$. Rosenberg observed  in \cite{Rosenberg} that the image of the $^*$-homomorphism ${A^H \ni a} \mapsto ap^\beta = \frac{1}{|H|}\sum_{h\in H}a u_h\in A\rtimes_\beta H$ is equal to $p^\beta(A \rtimes_\beta H)p^\beta$, so that we get an isomorphism
\begin{equation} \label{eq:Rosenberg}
A^H \cong p^\beta(A \rtimes_\beta H)p^\beta.
\end{equation}

We say that $\beta$ is {\em saturated} if $Ap^{\beta}A$ (or $p^{\beta}$, if $A$ is unital) is {\em full} in $A\rtimes_\beta H$, i.e., not contained in any proper closed two-sided ideal in $A\rtimes_\beta H$. 
Of course, this always holds if the crossed product $A\rtimes_\beta H$ is simple. The following result is then a direct consequence of Izumi's \cite[Corollary 6.6]{Izumi}. 

\begin{theorem}[Izumi]\label{thm-Izumi}
Let $\beta \colon  H\to \Aut(A)$ be an outer action of a finite group $H$ on a unital $C^*$-algebra $A$.
Then the inclusion 
$A^{H,\beta}\subseteq A$ is $C^*$-irreducible, and the intermediate algebras of the inclusion  are precisely the
 fixed-point algebras  $A^{L,\beta}$ for the subgroups $L\subseteq H$.
 \end{theorem}

\noindent
The following lemma is a modification of \cite[Lemma 3.2]{Kishimoto} by Kishimoto. We are grateful to Masaki Izumi for pointing out to us a modification 
of our original argument which assumed, in addition to the assumptions given in the lemma, that $\alpha_j$ commutes with $\beta_t$,  for all $1\leq j\leq n$ and $t\in H$. 

\begin{lemma} \label{lm:OP}
Let $A$ be a unital  simple \Cs, let $\beta \colon H \to \Aut(A)$ be an action of a finite group $H$ on $A$. Let $\alpha_1, \dots, \alpha_n$ be  automorphisms  of $A$,  and let $a_1, \dots, a_n \in A$ and  $\ep >0$ be given. Suppose that  $\alpha_j \circ \beta_t$ is outer on $A$, for all 
$1\leq j\leq n$ and for all $t \in H$.

Then there exists a positive element $h \in A^H$ with $\|h\|=1$ such that $\|ha_j\alpha_j(h)\| \le \ep$, for all $j=1,\dots,n$.
\end{lemma}

\begin{proof}  First observe that $\alpha_j \circ \beta_t$ is outer for all $t\in H$ implies that 
$\beta_{s^{-1}}\circ \alpha_j\circ \beta_t$ is outer as well, for all $s,t\in H$, which follows from the fact that the conjugate of an outer automorphism by an arbitrary automorphism remains outer.  

It follows then  from  \cite[Lemma 3.2]{Kishimoto}
 that there exists a positive element $h_0 \in A$ with $\|h_0\| = 1$ and 
 $$\|h_0\beta_{s^{-1}}(a_j) (\beta_{s^{-1}}\circ \alpha_j \circ \beta_t)(h_0)\| \le \ep|H|^{-2}, \quad s,t \in H, \; 1 \le j \le n.$$
Applying the automorphism $\beta_s$ to the inequality above, we obtain that 
$\|\beta_s(h_0)a_j \alpha_j(\beta_t(h_0))\| \le \ep|H|^{-2}$, for all $s,t \in H$ and for all $j=1,2, \dots, n$. Set $h_1 = |H|^{-1} \sum_{s \in H} \beta_s(h_0)$. Then $h_1$ is a positive element in $A^H$, and 
    $$\|h_1a_j\alpha_j(h_1)\| \le |H|^{-2}  \sum_{s,t\in H} \|\beta_s(h_0)a_j \alpha_j(\beta_t(h_0))\| \le \ep |H|^{-2}.$$
Since $\|h_1\| \ge |H|^{-1} \|h_0\| = |H|^{-1}$, it follows that $h := \|h_1\|^{-1} h_1$ has the desired properties.
\end{proof}

\noindent
We proceed to state our first main result characterizing when inclusions of the form $A^{H,\beta} \subseteq A \rtimes_{\alpha,r} G$ are $C^*$-irreducible. Thanks to some very helpful comments by Izumi we can now state this theorem in a stronger form than in a previous version of this paper, where it was assumed that the actions
$\alpha$ and $\beta$ commute and that the  group  $H$ is abelian.

\begin{theorem}\label{thm:fixed}
Let $A$ be a unital, simple $C^*$-algebra and let 
$\alpha\colon G\to \Aut(A)$ and $\beta\colon H\to \Aut(A)$ be actions of a discrete group $G$ and a finite group  $H$.
 The following are equivalent:
\begin{enumerate}
\item $A^{H,\beta} \subseteq A \rtimes_{\alpha,r} G$ is $C^*$-irreducible,
\item  $(A^{H,\beta})' \cap (A \rtimes_{\alpha,r} G) = \CC$,
\item the automorphisms $\alpha_g\circ \beta_t$ are outer for all $(e_G, e_H)\neq (g,t)\in G\times H$.
\end{enumerate}
   \end{theorem}
   
   \begin{proof}  
 (i) $\Rightarrow$ (ii) follows from  \cite[Remark 3.8]{Rordam}. 
 
  (ii) $\Rightarrow$ (iii). Suppose that $\alpha_g\circ \beta_t$ is inner for some $(e_G, e_H)\neq (g,t)\in G\times H$. Then there is a unitary $u \in A$ such that $\beta_t = \alpha_{g^{-1}} \circ \Ad u = \Ad u_{g-1}u$ (where $g \mapsto u_g \in A \rtimes_{\alpha,r} G$ is the  unitary implementation of $\alpha$). Hence $u_{g^{-1}}u \in ({A^{H}})' \cap  (A \rtimes_{\alpha,r} G)$, and $u_{g^{-1}}u \notin \CC$ since $u$ belongs to $A$ and $u_{g^{-1}}$ does not.

 (iii) $\Rightarrow$ (i).   Let $x$ be a non-zero positive element in $A \rtimes_{\alpha,r} G$. We show that $x$ is full relatively to $A^H$ in the sense of \cite[Definition 3.4]{Rordam}.  It follows then from \cite[Proposition 3.7]{Rordam} that $A^H\subseteq A\rtimes_{\alpha,r}G$ is $C^*$-irreducible. 
 
 Let $E \colon A \rtimes_{\alpha,r} G \to A$ be the canonical conditional expectation. Then  $E(x) \in A$ is non-zero and positive.
Since $A^H \subseteq A$ is $C^*$-irreducible by Theorem~\ref{thm-Izumi} (Izumi), 
 it follows from \cite[Proposition 3.7 and Lemma 3.5]{Rordam} that there exist $b_1, \dots, b_n \in A^H$ such that $1_{A^H} \le \sum_{j=1}^n b_j^* E(x) b_j = \sum_{j=1}^n E(b_j^* x b_j)$.  Upon replacing $x$ with the non-zero positive element $\sum_{j=1}^n b_j^*xb_j$,  we may therefore assume that $E(x) \ge 1_{A^H}$.
 
Let $0 < \ep < 1$ be given. Choose $y = \sum_{g \in G} a_g u_g \in A \rtimes_{\alg} G$ such that $\|x-y\| < \ep/3$.
 By Lemma~\ref{lm:OP} we can find a positive element $h \in A^H$ with $\|h\| = 1$ such that $\|h(y-E(y))h\| \le \ep/3$. This implies that $\|h(x-E(x))h\| \le \ep$. Note that
 $$hxh \ge hE(x)h-\ep \! \cdot \! 1_{A^H} \ge h^2 - \ep \! \cdot \! 1_{A^H},$$
 so $h^2 xh^2 \ge h^4 - \ep h^2$. Let $\varphi \colon [0,1] \to \RR^+$ be a continuous function which vanishes on $[0,\sqrt{\ep}]$ and which is non-zero on $(\sqrt{\ep},1]$. Then $d:= \varphi(h)(h^4 - \ep h^2)\varphi(h)$ is non-zero and $\varphi(h)h^2xh^2\varphi(h) \ge d > 0$. By simplicity of $A^H$, which follows from outerness of $\beta$, cf.\ the comments below \eqref{eq:Rosenberg}, there exist $b_1, \dots, b_n \in A^H$ such that $\sum_{j=1}^n b_j^*db_j = 1_{A^H}$. It follows that 
$$\sum_{j=1}^n b_j^*\varphi(h)h^2xh^2\varphi(h)   b_j \ge \sum_{j=1}^n b_j^*db_j = 1_{A^H},$$
which proves that $x$ is full relatively to $A^H$.
\end{proof}

\begin{remark} It follows from Izumi's \cite[Theorem 3.3]{Izumi} that an inclusion $B \subseteq A$  of simple unital \Cs s with a conditional expectation $E \colon A \to B$ of finite index is $C^*$-irreducible if (and only if) it is irreducible (i.e., $A \cap B' = \CC$). The inclusions $A^{H,\beta} \subseteq A\rtimes_{\alpha,r} G$ considered in Theorem~\ref{thm:fixed} do have finite index with respect to the composition of the canonical conditional expectations $E_1 \colon A\rtimes_{\alpha,r} G \to A$ and $E_2 \colon A \to A^{H,\beta}$ \emph{provided that $G$ is finite}. Hence the implication (ii) $\Rightarrow$ (i) of Theorem~\ref{thm:fixed} is a consequence of Izumi's theorem when $G$ is finite. Note that our proof of Theorem~\ref{thm:fixed}  does not factor through Izumi's theorem. 
\end{remark}

\begin{remark} Condition (iii) of  Theorem~\ref{thm:fixed} is equivalent to saying that the actions $\alpha\colon G\to \Aut(A)$ and $\beta\colon H\to \Aut(A)$  are outer, so that  $G$ and $H$ may be identified with subgroups of $\mathrm{Out}(A)$, the outer automorphisms on $A$, and that $G$ and $H$ intersect trivially in $\mathrm{Out}(A)$.  This condition is identical with the condition in \cite[Corollary 4.1 (i)]{BH1996} of Bisch and Haagerup ensuring irreducibility of an inclusion $P^H \subseteq P \rtimes G$ of II$_1$ factors arising from finite groups $G$ and $H$ acting outerly on a II$_1$ factor $P$. 
\end{remark}

\section{A Galois correspondence for the intermediate subalgebras}\label{sec-Galois}

In this section we shall establish a Galois type classification of the intermediate subalgebras of the inclusions considered in Theorem~\ref{thm:fixed} under the additional assumptions that the two actions $\alpha$ and $\beta$ commute and that $H$ is abelian. 

Let us first recall that  if $\alpha \colon G\to \Aut(A)$ and $\beta \colon H\to \Aut(A)$ are outer actions on a simple unital $C^*$-algebra $A$ with $G$ discrete and $H$ finite, then the intermediate algebras of the inclusions $A^{H,\beta}\subseteq A$ and 
$A\subseteq A\rtimes_{\alpha, r}G$ are in one-to-one correspondence to subgroups $L\subseteq H$ and $K \subseteq G$ by taking the fixed-point algebras $A^{L,\beta}$ and the crossed products $A\rtimes_{\alpha, r} K$, respectively, as shown by Izumi, \cite{Izumi}, and Cameron-Smith, \cite{Cameron-Smith}.

At present time it  is not clear to us how one can describe all intermediate algebras of an inclusion $A^{H,\beta}\subseteq A\rtimes_{\alpha,r}G$  in the general setting of Theorem~\ref{thm:fixed}, but we can give a satisfactory answer in the case where $H$ is abelian and the actions $\alpha$ and $\beta$ commute. 
Note that in the abelian case there is a bijection between subgroups $L$ of $H$ and subgroups of the Pontryagin dual $\widehat{H}=\Hom(H,\TT)$ 
given by $L\mapsto L^{\perp}$, where
\begin{equation} \label{eq:L-hat}
L^{\perp} :=\{x\in \widehat{H}: \langle \ell,x\rangle=1 \; \text{for all} \;  \ell \in L\}.
\end{equation}

Suppose now that $\alpha \colon G\to\Aut(A)$ and $\beta \colon H\to\Aut(A)$ are \emph{commuting} actions of discrete groups $G$ and $H$ on a simple \Cs{} $A$. Then we get an action
$$\alpha\times \beta \colon G\times H\to \Aut(A); \qquad (\alpha\times\beta)_{(g,h)}:=\alpha_g \circ \beta_h,\quad (g,h)\in G\times H.$$

We shall more than once use the fact that if $\alpha$ and $\beta$ are commuting actions as above, then $\beta$ extends naturally to an action $\tilde{\beta}$ on $A \rtimes_{\alpha, r} G$  given, for $h\in H$ and  $\sum_{g\in G}a_g u_g\in A\rtimes_{\alpha,\alg}G$, by 
\begin{equation*}\label{eq-tildeaction}
\tilde{\beta}_h(\sum_{g \in G} a_g u_g) =  \sum_{g\in G}\beta_h(a_g)u_g.
\end{equation*}

The following lemma is well-known to experts (e.g., see \cite[Lemma 2.9]{EE}, where a more general result is shown for full crossed products). 
For completeness we include the easy proof.

\begin{lemma}\label{lem-crossed}
Suppose that $\alpha\times\beta \colon G\times H\to \Aut(A)$ is an action of the discrete product group $G\times H$, as above, with $H$ is finite. Suppose further that $\beta \colon H\to \Aut(A)$ is saturated. Then the following hold:
\begin{enumerate}
\item The fixed-point algebra $A^{H,\beta}$ is a $G$-invariant subalgebra of $A$, and $\alpha$ therefore
restricts to a well-defined action $\alpha^H\colon G\to \Aut(A^{H,\beta})$;
\item  the natural extension of $\beta$ to  $\tilde\beta \colon  H\to \Aut(A\rtimes_{\alpha,r}G)$ is also saturated; 
\item  the canonical inclusion $A^{H,\beta}\rtimes_{\alpha^H,r}G\into A\rtimes_{\alpha,r}G$ co-restricts to an isomorphism
$$ A^{H,\beta}\rtimes_{\alpha^H,r}G\cong (A\rtimes_{\alpha,r}G)^{H,\tilde\beta}.$$
\end{enumerate}
\end{lemma}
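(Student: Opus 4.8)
The plan is to dispatch (i) directly and then to isolate a single algebraic engine that drives both (ii) and (iii), namely the identification of iterated crossed products for commuting actions. For (i), since $\alpha$ and $\beta$ commute, any $a\in A^{H,\beta}$, $g\in G$, $h\in H$ satisfy $\beta_h(\alpha_g(a))=\alpha_g(\beta_h(a))=\alpha_g(a)$, so $\alpha_g(a)\in A^{H,\beta}$; hence each $\alpha_g$ restricts to $A^{H,\beta}$ and $\alpha^H\colon G\to\Aut(A^{H,\beta})$ is well defined. The engine is the standard pair of canonical isomorphisms $(A\rtimes_{\alpha,r}G)\rtimes_{\tilde\beta}H\cong A\rtimes_{\alpha\times\beta,r}(G\times H)\cong (A\rtimes_{\beta}H)\rtimes_{\tilde\alpha,r}G$, where finiteness of $H$ makes the $H$\nb-crossed products reduced, and $\tilde\alpha$ is the extension of $\alpha$ to $A\rtimes_\beta H$ defined analogously to $\tilde\beta$. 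Under these isomorphisms the canonical unitaries match, $v_g\leftrightarrow u_{(g,e)}$ and $w_h\leftrightarrow u_{(e,h)}$, so the two averaging projections coincide, $p^{\tilde\beta}=\frac1{|H|}\sum_h w_h=\frac1{|H|}\sum_h u_{(e,h)}=p^\beta$, and $p^\beta$ is $\tilde\alpha$\nb-invariant because $\tilde\alpha_g$ fixes each $u_h$.

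For (ii), I would first prove the general fullness-lifting fact: if $J$ is full in a \Cs{} $C$ and $\gamma\colon G\to\Aut(C)$ is any action, then $J$ is full in $C\rtimes_{\gamma,r}G$. Indeed, the ideal $I$ generated by $J$ contains $\overline{CJC}=C$, and any ideal containing $C$ contains every $Cv_g$ and is therefore all of $C\rtimes_{\gamma,r}G$. Applying this with $C=A\rtimes_\beta H$, $J=Ap^\beta A$ (full by the saturation hypothesis on $\beta$), and $\gamma=\tilde\alpha$, and transporting along the engine, $Ap^\beta A=Ap^{\tilde\beta}A$ is full and is contained in $(A\rtimes_{\alpha,r}G)p^{\tilde\beta}(A\rtimes_{\alpha,r}G)$, so the latter is full as well; thus $\tilde\beta$ is saturated.

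For (iii) I would chain four identifications. Rosenberg's isomorphism \eqref{eq:Rosenberg} applied to $\tilde\beta$ gives $(A\rtimes_{\alpha,r}G)^{H,\tilde\beta}\cong p^{\tilde\beta}\big((A\rtimes_{\alpha,r}G)\rtimes_{\tilde\beta}H\big)p^{\tilde\beta}$; transporting along the engine rewrites the right-hand side as $p^\beta\big((A\rtimes_\beta H)\rtimes_{\tilde\alpha,r}G\big)p^\beta$. Since $p^\beta$ is an $\tilde\alpha$\nb-invariant projection, I would invoke the corner/crossed-product compatibility $p\big(C\rtimes_{\gamma,r}G\big)p\cong (pCp)\rtimes_{\gamma,r}G$ for invariant $p$ to turn this into $\big(p^\beta(A\rtimes_\beta H)p^\beta\big)\rtimes_{\tilde\alpha,r}G$. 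Finally Rosenberg's isomorphism for $\beta$ identifies $p^\beta(A\rtimes_\beta H)p^\beta\cong A^{H,\beta}$ via $a\mapsto ap^\beta$, and the computation $\tilde\alpha_g(ap^\beta)=\alpha_g(a)p^\beta$ shows this carries the restricted $\tilde\alpha$ to $\alpha^H$, giving $\big(p^\beta(A\rtimes_\beta H)p^\beta\big)\rtimes_{\tilde\alpha,r}G\cong A^{H,\beta}\rtimes_{\alpha^H,r}G$. Composing the four maps yields an isomorphism $A^{H,\beta}\rtimes_{\alpha^H,r}G\cong (A\rtimes_{\alpha,r}G)^{H,\tilde\beta}$, and tracing a generator $a\,v_g$ with $a\in A^{H,\beta}$ through the chain sends it successively to $ap^\beta$, then to $a p^\beta v_g=a v_g p^\beta$, then to $a v_g p^{\tilde\beta}$, which is exactly the Rosenberg image of $a v_g\in (A\rtimes_{\alpha,r}G)^{H,\tilde\beta}$; hence the composite is precisely the co-restriction of the canonical inclusion $A^{H,\beta}\rtimes_{\alpha^H,r}G\into A\rtimes_{\alpha,r}G$.

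The conceptual steps are short, and the bulk is bookkeeping. The one point requiring genuine care is the corner/crossed-product identification $p(C\rtimes_{\gamma,r}G)p\cong (pCp)\rtimes_{\gamma,r}G$ for an invariant \emph{multiplier} projection $p$: one must verify that it respects the reduced norm rather than only the algebraic or full crossed product, and that $p$ lying in $M(C)$ rather than in $C$ causes no difficulty. Both are handled by passing to a faithful covariant representation on $\mathcal H\otimes\ell^2(G)$, where $\tilde\gamma$\nb-invariance of $p$ forces its image to be $p\otimes 1$, so that cutting down splits the Hilbert space as $p\mathcal H\otimes\ell^2(G)$ and exhibits both algebras on the same space. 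The remaining risk is merely losing track of the several identifications, in particular the equality $p^{\tilde\beta}=p^\beta$ and the verification that the composite is the canonical inclusion and not a twisted variant of it.
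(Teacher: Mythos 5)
Your proposal is correct and follows essentially the same route as the paper: the identification $(A\rtimes_{\alpha,r}G)\rtimes_{\tilde\beta}H\cong(A\rtimes_\beta H)\rtimes_{\tilde\alpha,r}G$ matching $p^\beta$ with $p^{\tilde\beta}$, fullness lifted to the iterated crossed product for (ii), and for (iii) the chain through Rosenberg's isomorphism and the corner identification $p^\beta\bigl((A\rtimes_\beta H)\rtimes_{\tilde\alpha,r}G\bigr)p^\beta=\bigl(p^\beta(A\rtimes_\beta H)p^\beta\bigr)\rtimes_{\tilde\alpha,r}G$. Your explicit fullness-lifting lemma, the equivariance check $\tilde\alpha_g(ap^\beta)=\alpha_g(a)p^\beta$, and the generator chase confirming the map is the co-restriction of the canonical inclusion merely spell out steps the paper's proof asserts implicitly.
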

\begin{proof} The first assertion is a direct consequence of the fact that $\alpha$ and $\beta$ commute. 
For the proof of (ii) we first observe that the canonical inclusion 
$$A\rtimes_\beta H \into (A\rtimes_\beta H)\rtimes_{\tilde\alpha,r}G\cong (A\rtimes_{\alpha,r}G)\rtimes_{\tilde\beta}H$$
maps the projection $p^{\beta}\in M(A\rtimes_{\beta}H)$ to the projection 
$p^{\tilde\beta}$ in the multiplier algebra $M((A\rtimes_{\alpha,r}G)\rtimes_{\tilde\beta}H)$.
Since $p^\beta$ is full in $A\rtimes_{\beta}H$ it follows that
\begin{align*}
(A\rtimes_{\alpha,r}G)\rtimes_{\tilde\beta}H&=(A\rtimes_{\beta}H)\rtimes_{\tilde\alpha,r}G\\
&\cong 
\overline{\big((A\rtimes_\beta H)p^{\beta}(A\rtimes_\beta H)\big)\rtimes_{\tilde\alpha, r}G}\\
&=\overline{\big((A\rtimes_\beta H)\rtimes_{\tilde\alpha,r}G\big)p^{\beta}\big((A\rtimes_\beta H)\rtimes_{\tilde\alpha,r} G\big)}\\
&=\overline{\big((A\rtimes_{\alpha,r}G)\rtimes_{\tilde\beta}H\big)p^{\tilde\beta} \big((A\rtimes_{\alpha,r}G)\rtimes_{\tilde\beta}H\big)}.
\end{align*}
Hence $p^{\tilde\beta}$ is full in $(A\rtimes_{\alpha,r}G)\rtimes_{\tilde\beta}H$ which proves (ii). 
The proof of  (iii) then follows from 
\begin{align*}
(A\rtimes_{\alpha,r}G)^{H,\tilde\beta}&=p^{\tilde{\beta}}\big((A\rtimes_{\alpha,r}G)\rtimes_{\tilde\beta}H\big)p^{\tilde\beta}\\
&=p^{{\beta}}\big((A\rtimes_\beta H)\rtimes_{\tilde\alpha,r}G\big)p^{\beta}\\
&=\big(p^{\beta}(A\rtimes_{\beta}H)p^{\beta}\big)\rtimes_{\tilde\alpha, r}G\\
&=A^{H,\beta}\rtimes_{\alpha,r}G,
\end{align*}
where the first and the last isomorphism in the above computation follow from Rosenberg's equation (\ref{eq:Rosenberg}).
\end{proof}

\noindent
Using the above observation, we can now prove:

\begin{proposition}\label{prop-outer} Let $\alpha$ and $\beta$ be commuting actions of discrete groups $G$ and $H$ on a simple $C^*$-algebra $A$, with $H$ finite, as above. Suppose further that $\alpha \times \beta \colon G \times H \to \Aut(A)$ is outer. Then the restricted  action 
$\alpha^H \colon G\to\Aut(A^{H,\beta})$ on the fixed-point algebra $A^{H,\beta}$ is outer.
\end{proposition}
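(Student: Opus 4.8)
\section*{Proof proposal}

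The plan is to deduce outerness of $\alpha^H$ from the cyclic-subgroup criterion of Lemma~\ref{lem-outer}, applied with $A^{H,\beta}$ in the role of the simple algebra, and then to verify that criterion for each cyclic subgroup of $G$ by combining Lemma~\ref{lem-crossed}(iii) with Kishimoto's theorem applied to the full product action $\alpha\times\beta$. Before doing so I would record the standing facts that make all of this applicable. Since $\alpha\times\beta$ is outer and $\{e\}\times H$ is a subgroup of $G\times H$, the restriction $\beta$ is outer; as $H$ is finite and $A$ is simple, Kishimoto's theorem gives that $A\rtimes_\beta H$ is simple, so $\beta$ is saturated and the hypotheses of Lemma~\ref{lem-crossed} hold. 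Moreover, by Rosenberg's isomorphism \eqref{eq:Rosenberg} the fixed-point algebra $A^{H,\beta}\cong p^\beta(A\rtimes_\beta H)p^\beta$ is a corner of a simple \Cs{} by the nonzero projection $p^\beta$, hence is itself simple. This is exactly what is needed in order to invoke Lemma~\ref{lem-outer} on $A^{H,\beta}$.

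By Lemma~\ref{lem-outer} (the implication (iii)$\Rightarrow$(i)), to prove that $\alpha^H\colon G\to\Aut(A^{H,\beta})$ is outer it suffices to show that $A^{H,\beta}\rtimes_{\alpha^H,r}C_g$ is simple for every cyclic subgroup $C_g=\langle g\rangle$ of $G$. Fix such a $C_g$. The restrictions of $\alpha$ and $\beta$ to $C_g$ and $H$ still commute and $\beta$ is still saturated, so Lemma~\ref{lem-crossed}(iii), applied verbatim with $C_g$ in place of $G$, yields an isomorphism
$$A^{H,\beta}\rtimes_{\alpha^H,r}C_g\cong (A\rtimes_{\alpha,r}C_g)^{H,\tilde\beta}.$$
Applying \eqref{eq:Rosenberg} once more, the right-hand side is the corner $p^{\tilde\beta}\big((A\rtimes_{\alpha,r}C_g)\rtimes_{\tilde\beta}H\big)p^{\tilde\beta}$.

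It remains to see that the ambient crossed product $(A\rtimes_{\alpha,r}C_g)\rtimes_{\tilde\beta}H$ is simple, for then its corner by the nonzero projection $p^{\tilde\beta}$ is simple and we are done. Here I would use the iterated-crossed-product identity $(A\rtimes_{\alpha,r}C_g)\rtimes_{\tilde\beta}H\cong A\rtimes_{\alpha\times\beta,r}(C_g\times H)$, the same identification already exploited in the proof of Lemma~\ref{lem-crossed}. Since $\alpha\times\beta$ is outer, its restriction to the subgroup $C_g\times H$ is outer, so Kishimoto's theorem (equivalently, the implication (i)$\Rightarrow$(ii) of Lemma~\ref{lem-outer}) gives that $A\rtimes_{\alpha\times\beta,r}(C_g\times H)$ is simple. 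Running this for every cyclic $C_g\le G$ completes the argument.

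The main obstacle is conceptual rather than computational: one must resist the temptation to prove only that $A^{H,\beta}\rtimes_{\alpha^H,r}G$ is simple — which Lemma~\ref{lem-crossed}(iii) delivers immediately, but which is strictly weaker than outerness of $\alpha^H$ — and instead route the argument through the cyclic-subgroup characterization of Lemma~\ref{lem-outer}, so that the outerness hypothesis on $\alpha\times\beta$ can be fed in one subgroup $C_g\times H$ at a time. The remaining bookkeeping, namely checking that Lemma~\ref{lem-crossed} applies to $C_g\times H$ exactly as to $G\times H$ and that corners of simple \Cs s by nonzero projections are simple, is routine.
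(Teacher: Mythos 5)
Your proposal is correct and follows essentially the same route as the paper's proof: establish simplicity of $A^{H,\beta}$ via Kishimoto and Rosenberg's isomorphism \eqref{eq:Rosenberg}, reduce outerness of $\alpha^H$ to simplicity of $A^{H,\beta}\rtimes_{\alpha^H,r}C_g$ for cyclic subgroups via Lemma~\ref{lem-outer}, identify this crossed product with a corner of $A\rtimes_{\alpha\times\beta,r}(C_g\times H)$ via Lemma~\ref{lem-crossed}(iii), and conclude by Kishimoto applied to the outer restriction of $\alpha\times\beta$. Your closing remark correctly identifies the one genuine subtlety --- that simplicity of $A^{H,\beta}\rtimes_{\alpha^H,r}G$ alone would not suffice --- which is precisely why the paper also routes the argument through the cyclic-subgroup criterion.
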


 \begin{proof} Let $\alpha\times \beta \colon G\times H\to\Aut(A)$ be as  above. Since $A$ is simple and $\beta$ is outer, it follows from Kishimoto's theorem that $A\rtimes_\beta H$ is simple as well. Hence $\beta$ is saturated 
 and $A^{H,\beta}$ is a full corner of $A\rtimes_{\beta}H$ by the full projection $p^{\beta}$. Since full corners of simple $C^*$-algebras are simple, it follows that $A^{H,\beta}$ is  simple. 
 
 Thus, by Lemma \ref{lem-outer}, it suffices to show that for every  subgroup $M\subseteq G$ the crossed product 
$A^{H,\beta}\rtimes_{\alpha^H,r} M$ is simple. But it follows from  Lemma \ref{lem-crossed} that 
 $A^{H,\beta}\rtimes_{\alpha^H,r} M= (A\rtimes_{\alpha,r}M)^{H,\tilde\beta}$ which is a full corner of 
 $(A\rtimes_{\alpha,r}M)\times_{\tilde\beta}H\cong A\rtimes_{\alpha\times\beta,r}(M\times H)$.
 But the latter is simple, again by Kishimoto's theorem. 
 \end{proof}

%
 
 \noindent
We shall also need the lemma below. Let $\beta \colon H\to \Aut(A)$ be an action of a {\em discrete abelian} group 
 $H$ on a $C^*$-algebra $A$. The dual action $\widehat{\beta}\colon \widehat{H} \to \Aut(A\rtimes_\beta H)$ 
is for $x\in \widehat{H}$ and  $b=\sum_{h\in H} a_hu_h\in A\rtimes_{\beta,\alg}H$  given  by 
  $$\widehat\beta_x(b)=\sum_{h\in H} \overline{\langle h,x\rangle} \, a_h u_h.$$
Since $\widehat{H}$ is a compact abelian group, the subgroup $L^{\perp}$ of $\widehat{H}$, defined in \eqref{eq:L-hat}, associated with a subgroup $L$ of $H$, is compact as well.

 \begin{lemma}\label{lem-fixed}
 Suppose that $\beta \colon H\to \Aut(A)$ is an action of a discrete abelian group on a $C^*$-algebra $A$ and let $L$ be a subgroup of $H$.
 Then
 $$A\rtimes_\beta L=(A\rtimes_\beta H)^{L^\perp,\widehat{\beta}},$$
when $A\rtimes_\beta L$ is viewed as a subalgebra of $A\rtimes_\beta H$.
 \end{lemma}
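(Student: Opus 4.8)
The plan is to realize both sides as the range of one and the same conditional expectation. Since $H$ is discrete, $\widehat{H}$ is compact, and as already noted $L^\perp$ is a closed, hence compact, subgroup of $\widehat{H}$. The restriction of the dual action $\widehat\beta$ to $L^\perp$ is therefore an action of a \emph{compact} group, and averaging against the normalized Haar measure $\dd x$ on $L^\perp$ produces a conditional expectation
\[
E\colon A\rtimes_\beta H\to (A\rtimes_\beta H)^{L^\perp,\widehat\beta},\qquad E(b)=\int_{L^\perp}\widehat\beta_x(b)\,\dd x,
\]
the integral making sense as the dual action is strongly continuous. Its range is exactly the fixed-point algebra: invariance of Haar measure shows $E(b)$ is $\widehat\beta$-fixed by $L^\perp$, while $E$ restricts to the identity on $(A\rtimes_\beta H)^{L^\perp,\widehat\beta}$.

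Next I would compute $E$ on the dense algebraic crossed product $A\rtimes_{\alg}H$. For $b=\sum_{h\in H} a_h u_h$ the defining formula for $\widehat\beta_x$ gives $E(b)=\sum_{h\in H} c_h\, a_h u_h$ with $c_h=\int_{L^\perp}\overline{\langle h,x\rangle}\,\dd x$. Here $x\mapsto\langle h,x\rangle$ is a character of the compact group $L^\perp$, so orthogonality of characters yields $c_h=1$ when this character is trivial and $c_h=0$ otherwise; that is, $c_h=1$ precisely when $h$ lies in the annihilator of $L^\perp$ computed inside $\widehat{\widehat{H}}=H$. By Pontryagin duality this double annihilator equals $\overline{L}=L$, the closure being superfluous since $H$ is discrete. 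Hence $E\big(\sum_{h\in H} a_h u_h\big)=\sum_{h\in L} a_h u_h$.

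This last expression is exactly the standard conditional expectation $E_L$ of $A\rtimes_\beta H$ onto the copy of $A\rtimes_\beta L$ obtained by restricting the Fourier coefficients to $L$; recall that because $H$, and hence $L$, is abelian and therefore amenable, full and reduced crossed products coincide and the canonical inclusion $A\rtimes_\beta L\into A\rtimes_\beta H$ is injective, so that $A\rtimes_\beta L$ is indeed identified with $\operatorname{ran}(E_L)$. Since $E$ and $E_L$ are both contractive and agree on the dense subalgebra $A\rtimes_{\alg}H$, they coincide on all of $A\rtimes_\beta H$. Comparing ranges then gives
\[
(A\rtimes_\beta H)^{L^\perp,\widehat\beta}=\operatorname{ran}(E)=\operatorname{ran}(E_L)=A\rtimes_\beta L,
\]
as desired.

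The only genuinely nontrivial input is the double-annihilator identity $(L^\perp)^\perp=L$, which is where Pontryagin duality enters and which I expect to be the main (indeed the sole conceptual) obstacle; everything else is the routine verification that the Haar-averaging projection and the Fourier-coefficient projection are the same contractive map, checked on the algebraic crossed product and extended by density.
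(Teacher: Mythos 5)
Your proof is correct and follows essentially the same route as the paper: both average over $L^\perp$ against normalized Haar measure to get the conditional expectation $E(b)=\int_{L^\perp}\widehat\beta_x(b)\,dx$, compute it on the algebraic crossed product via the orthogonality relation $\int_{L^\perp}\langle h,x\rangle\,dx=1$ for $h\in L$ and $0$ otherwise (the double-annihilator fact $(L^\perp)^\perp=L$ you isolate), and conclude that the fixed-point algebra equals $A\rtimes_\beta L$. Your repackaging via the standard expectation $E_L$ and a comparison of ranges, together with the explicit remark on injectivity of $A\rtimes_\beta L\into A\rtimes_\beta H$ by amenability, is only a cosmetic variation on the paper's two-inclusion argument.
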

 \begin{proof} Let $b=\sum_{l\in L} a_l u_l\in A\rtimes_{\alg,\beta} L$. Then 
$$
 \widehat{\beta}_x(b)=\sum_{l\in L} \overline{\langle l,x\rangle} \,  a_l  u_l=\sum_{l\in L}a_l  u_l=b,
$$
for all $x\in L^{\perp}$, so $b$ lies in $(A\rtimes_\beta H)^{L^\perp}$. This proves that $A\rtimes_\beta L \subseteq (A\rtimes_\beta H)^{L^\perp}$.

To prove the converse inclusion we make use  of the conditional expectation $E \colon A  \rtimes_\beta H \to A \rtimes_\beta L$ given by $E(b)=\int_{L^{\perp}} \widehat{\beta}_x(b)\, dx$, where the integral is with respect to the normalized Haar measure. To see that $E$ indeed maps 
$A  \rtimes_\beta H$ onto $A \rtimes_\beta L$, note first that
\begin{equation}\label{eq-Lperp}
\int_{L^{\perp}}\langle h,x\rangle\, dx= \begin{cases} 1, & \text{for} \; \;   h\in L, \\ 0, & \text{for} \; \;  h \in H \setminus L. \end{cases}
\end{equation}
Hence, for $b= \sum_{h\in H} a_h u_h\in A\rtimes_{\beta, \alg}H$, we have
$$
E(b)=\int_{L^{\perp}} \widehat{\beta}_x(b)\, dx
=\int_{L^{\perp}}\sum_{h\in H}   \overline{\langle h,x\rangle}  \,  a_h u_h\, dx = \sum_{l\in L} a_l u_l\in A\rtimes_\beta L.$$
This shows that the range of $E$ is contained in $A \rtimes_\beta L$ and that $E$ is the identity on $A \rtimes_\beta L$. Now, since $E(b) = b$, whenever $b \in (A\rtimes_\beta H)^{L^\perp}$, we are done.
 \end{proof}
  
\noindent We now provide an elaboration of the observation by Rosenberg stated in \eqref{eq:Rosenberg} relating the fixed-point algebra to a crossed product. Two inclusions $B_1 \subseteq A_1$ and $B_2 \subseteq A_2$ of \Cs s are said to be isomorphic if there is a $^*$-isomorphism $\phi \colon A_1 \to A_2$ with $\phi(B_1) = B_2$. Clearly, if $B_1 \subseteq A_1$ and $B_2 \subseteq A_2$ are isomorphic, and if one of the inclusions is $C^*$-irreducible, then so is the other.

\begin{proposition} \label{prop:morita}
Let $\beta$ be an action of a finite abelian group $H$ on a   \Cs{} $A$. Then, with $p^\beta \in M(A \rtimes_\beta H)$ as defined above \eqref{eq:Rosenberg}, there is an isomorphism $\psi \colon A \to  p^\beta(A \rtimes_\beta H \rtimes_{\widehat{\beta}} \widehat{H})p^\beta$ satisfying $\psi(A^{H,\beta}) = p^\beta(A \rtimes_\beta H)p^\beta$, thus implementing an isomorphism between 
the two inclusions
$$A^{H,\beta} \subseteq A \quad \text{and} \quad 
p^\beta(A \rtimes_\beta H)p^\beta \subseteq 
p^\beta(A \rtimes_\beta H \rtimes_{\widehat{\beta}} \widehat{H})p^\beta.$$
Moreover, for each subgroup $L\subseteq H$ we have $\psi(A^{L,\beta}) = p^{\beta}(A\rtimes_\beta H\rtimes_{\widehat{\beta}} L^{\perp})p^{\beta}$, where $L^\perp \subseteq \widehat{H}$ is the annihilator defined above 
Lemma~\ref{lem-fixed}. 
\end{proposition}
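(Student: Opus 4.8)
The natural tool is Takai--Takesaki duality, and the whole statement will fall out once the image of $p^\beta$ and of the subalgebras $A\rtimes_\beta H\rtimes_{\widehat\beta}L^\perp$ are made explicit. Write $D:=A\rtimes_\beta H\rtimes_{\widehat\beta}\widehat H$. First I would invoke the duality isomorphism $\Phi\colon D\congto A\otimes\K(\ell^2(H))$, which, for the conventions used here, carries the canonical unitaries $u_h\in A\rtimes_\beta H$ to $1\otimes\lambda_h$, where $\lambda$ is the left regular representation of $H$ on $\ell^2(H)$, and carries the bidual action $\widehat{\widehat\beta}$ of $\widehat{\widehat H}\cong H$ to $\beta\otimes\Ad\rho$, where $\rho$ is the right regular representation. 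Since $p^\beta=\frac1{|H|}\sum_h u_h$, its image under $\Phi$ is $1\otimes q$, where $q=\frac1{|H|}\sum_h\lambda_h\in\K(\ell^2(H))$ is the rank-one projection onto the space of $H$-invariant vectors (the constant functions). Consequently $\Phi$ restricts to an isomorphism of the corner $p^\beta D p^\beta$ onto $A\otimes q\K(\ell^2(H))q=A\otimes\CC q\cong A$, and I would take $\psi\colon A\to p^\beta D p^\beta$ to be the inverse of this composite, that is, $\psi(a)=\Phi^{-1}(a\otimes q)$.

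The key step is then to observe that $\psi$ is equivariant, intertwining $\beta$ on $A$ with the restriction of $\widehat{\widehat\beta}$ to the corner. Indeed, $q$ is fixed by each $\Ad\rho_h$ because the constant functions are translation invariant; hence $1\otimes q$, and therefore $p^\beta$, is fixed by $\widehat{\widehat\beta}$, so $\widehat{\widehat\beta}$ restricts to an action on $p^\beta D p^\beta$. A one-line computation, $\widehat{\widehat\beta}_h(\psi(a))=\Phi^{-1}\big((\beta_h\otimes\Ad\rho_h)(a\otimes q)\big)=\Phi^{-1}(\beta_h(a)\otimes q)=\psi(\beta_h(a))$, then gives $\psi\circ\beta_h=\widehat{\widehat\beta}_h\circ\psi$ for all $h\in H$.

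With equivariance in hand the subalgebra assertions follow by taking fixed points. For a subgroup $L\subseteq H$, equivariance yields $\psi(A^L)=\psi(A^{L,\beta})=(p^\beta D p^\beta)^{L,\widehat{\widehat\beta}}$, and since $p^\beta$ is $\widehat{\widehat\beta}$-fixed this equals $p^\beta D^{L,\widehat{\widehat\beta}}p^\beta$, using the elementary identity $(pBp)^{G}=pB^{G}p$ valid for a $G$-fixed projection $p$. It then remains to identify $D^{L,\widehat{\widehat\beta}}$ with $A\rtimes_\beta H\rtimes_{\widehat\beta}L^\perp$, and for this I would apply Lemma~\ref{lem-fixed} to the dual action $\widehat\beta$ of the finite abelian group $\widehat H$ on $A\rtimes_\beta H$ and to the subgroup $L^\perp\subseteq\widehat H$; this gives $A\rtimes_\beta H\rtimes_{\widehat\beta}L^\perp=D^{(L^\perp)^\perp,\widehat{\widehat\beta}}$, and Pontryagin duality $(L^\perp)^\perp=L$ under $\widehat{\widehat H}=H$ completes the identification. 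Taking $L=H$ (so $L^\perp=\{e\}$) recovers $\psi(A^H)=p^\beta(A\rtimes_\beta H)p^\beta$ and the claimed isomorphism of inclusions, while $L=\{e\}$ (so $L^\perp=\widehat H$) gives surjectivity of $\psi$ onto the full corner; one also checks that on $A^H$ the map $\psi$ reduces to Rosenberg's map $a\mapsto ap^\beta$ from \eqref{eq:Rosenberg}.

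The main obstacle is the bookkeeping of the first step: fixing the duality conventions so that $u_h\mapsto 1\otimes\lambda_h$, whence $p^\beta\mapsto 1\otimes q$ with $q$ the rank-one projection onto the invariant vectors, and so that the bidual action is $\beta\otimes\Ad\rho$ with $q$ fixed by $\Ad\rho$; once these are aligned with the pairing conventions defining $\widehat\beta$ and $L^\perp$, the equivariance and the fixed-point computations are routine. A careful version would either quote a reference for Takai duality that records the images of the generators, or verify them directly in the regular representation of $D$ on $\ell^2(H)\otimes\mathcal H_0$, where $u_h$ is left translation, the generators of the $\widehat H$-crossed product act diagonally through the characters, and $p^\beta$ is visibly the projection onto the constant functions, a rank-one projection in the $\K(\ell^2(H))$-tensor factor.
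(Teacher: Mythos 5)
Your proof is correct, but it routes the construction through Takai duality where the paper works by hand inside the iterated crossed product. The paper's proof uses Rosenberg's isomorphism \eqref{eq:Rosenberg} twice (for $\beta$ and for $\widehat{\beta}$), introduces the projections $p_x$ and $q_g$, and builds an explicit partial isometry $z=|H|^{1/2}p_eq_e$ with $z^*z=q_e$, $zz^*=p_e$, yielding the concrete formula $\psi(a)=|H|\,p_e a q_e p_e$; the $\beta$\nb-$\widehat{\widehat{\beta}}$ equivariance is then verified by a short computation with the commutation relations ($\widehat{\widehat{\beta}}_g(q_e)=u_g^*q_eu_g$ and $p_eu_g=p_e$). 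Amusingly, the paper mentions your mechanism in passing --- the equivalence of $p_e$ and $q_e$ via the copy of $M_{|H|}(\CC)$ generated by the $u_g$ and $\widehat{u}_x$ --- but deliberately adds ``we can also see this directly'' and avoids invoking duality. Your version instead transports everything to the model $A\otimes\K(\ell^2(H))$, where the corner identification $(1\otimes q)(A\otimes\K)(1\otimes q)=A\otimes\CC q$ and the invariance of $q$ under $\Ad\rho$ make both the isomorphism and the equivariance transparent; from that point on the two proofs coincide, both applying Lemma~\ref{lem-fixed} to $\widehat{\beta}$ together with $(L^\perp)^\perp=L$ (and your specializations $L=H$, $L=\{e\}$ are exactly right, as is the check that $\psi|_{A^H}$ is Rosenberg's map, since $\pi(a)=a\otimes 1$ for $a\in A^H$). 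The convention-bookkeeping you flag is the genuine cost of your route, but it is harmless here: even if a convention mismatch replaced $\beta_g$ by $\beta_{g^{-1}}$ in the intertwining, the fixed-point algebras $A^L$ and $(p^\beta Dp^\beta)^L$ are unchanged because $L$ is a subgroup; and for finite $H$ the generator images ($u_h\mapsto 1\otimes\lambda_h$, $\widehat{u}_x\mapsto$ multiplication by a character, $a\mapsto\mathrm{diag}(\beta_{h^{-1}}(a))$) can be verified directly as you indicate. What the paper's approach buys is self-containedness and an explicit formula for $\psi$ that is reused later (in Remark~\ref{rem-intermediate}, to see that $\psi$ is $\alpha$\nb-$\tilde{\alpha}$ equivariant); what yours buys is conceptual clarity and no ad hoc partial-isometry computation, with the $\alpha$-equivariance needed later still available from naturality of the duality isomorphism.
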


\begin{proof} Let  $u \colon H \to UM(A \rtimes_\beta H)$ and $\widehat{u} \colon \widehat{H} \to UM(A \rtimes_\beta H \rtimes_{\widehat{\beta}} \widehat{H})$
denote the canonical  representations implementing $\beta$ and $\widehat{\beta}$, respectively.
Let $\langle \, \cdot \, , \, \cdot \, \rangle \colon H \times \widehat{H} \to \TT$ denote the natural pairing between $H$ and $\widehat{H}$ as in Remark \ref{rem-outer}. 

By the definition of the dual action, $\widehat{u}_x \in A' \cap M(A \rtimes_\beta H \rtimes_{\widehat{\beta}} \widehat{H})$, for all $x \in \widehat{H}$, and $\widehat{u}_xu_g\widehat{u}_x^* =  \overline{\langle g,x \rangle} \, u_g$, for all $g \in H$ and $x \in \widehat{H}$. 

For each $g \in H$ and $x \in \widehat{H}$ set
$$p_x = \frac{1}{|H|} \sum_{g \in H}  \overline{\langle g,x \rangle} u_g, \qquad  q_g = \frac{1}{|H|} \sum_{x \in \widehat{H}}  \langle g,x \rangle  \widehat{u}_x.$$
(Note that  $|H|=|\widehat{H}|$.)
In the notation used above \eqref{eq:Rosenberg}, $p_e = p^\beta$ and $q_e = p^{\widehat{\beta}}$ (where $e$ denotes the neutral element in both groups). By definition of the dual action and the fact that $\widehat{u}$ implements $\widehat{\beta}$, it follows that
$$\widehat{u}_xu_g\widehat{u}_x^*=\widehat\beta_x(u_g)=\overline{\langle g,x\rangle} u_g, \qquad
u_g\widehat{u}_xu_g^*=u_g\widehat{u}_xu_{g^{-1}}\widehat{u}_x^*\widehat{u}_x=
\langle g,x\rangle \, \widehat{u}_x,$$
for all $g\in H, x\in \widehat{H}$. Together with  a variant of equation \eqref{eq-Lperp} it is then straightforward to verify that 
\begin{equation}\label{eq-uhatu}
1 = \sum_{g \in H} q_g = \sum_{x \in \widehat{H}} p_x, \qquad \widehat{u}_xp_e\widehat{u}_x^* = p_x, \qquad u_g q_e u_g^* = q_g,
\end{equation} 
for all $g \in H$ and $x \in \widehat{H}$.

Recall from  Lemma \ref{lem-fixed} that $A=(A\rtimes_\beta H)^{\widehat{H}}$. 
By Rosenberg's result, cf.\ \eqref{eq:Rosenberg}, we have $^*$-isomorphisms
$$\varphi \colon  A^H \to p_e(A \rtimes_\beta H)p_e, \qquad \psi_0 \colon A \to q_e(A \rtimes_\beta H \rtimes_{\widehat{\beta}} \widehat{H})q_e,$$
given by $\varphi(b) = b p_e = |H|^{-1}\sum_{g \in H} b u_g$ and $\psi_0(a) = a q_e = |{H}|^{-1} \sum_{x \in \widehat{H}} a \widehat{u}_x$, for $b \in A^H$ and $a \in A$.

Now, by Takai duality, the two projections $p_e$ and $q_e$ are equivalent in the \Cs{} generated by $\{u_g\}_{g \in H} \cup \{\widehat{u}_x\}_{x \in \widehat{H}}$ (since this $C^*$-algebra is isomorphic to $M_{|H|}(\CC)$ and $p_e$ and $q_e$ are minimal projections herein). We can also see this directly as follows: For $x \in \widehat{H}$ we have $p_e\widehat{u}_xp_e = p_ep_x\widehat{u}_x = \delta_{e,x} \, p_e$, so $p_eq_ep_e = |{H}|^{-1} p_e$. Similarly, $q_ep_eq_e = |H|^{-1} q_e$. Set $z =  |H|^{1/2} p_eq_e$. Then $z^*z = q_e$ and $zz^*=p_e$. Note that $z$ commutes with $A^H$. 
Define a $^*$-isomorphism
\begin{equation}\label{eq-psi}
\psi \colon A \to p_e(A \rtimes_\beta H \rtimes_{\widehat{\beta}} \widehat{H})p_e,\quad \psi(a) = z\psi_0(a)z^* \; 
(=|H|\, p_eaq_ep_e), \; \; a \in A.
\end{equation}
For $b \in A^H$ we have $\psi(b) = z(bq_e)z^* = bzq_ez^*=bp_e = \varphi(b)$. Hence $\psi(A^H) = \varphi(A^H) = p_e(A \rtimes_\beta H)p_e$, as desired.

\medskip
Let $L\subseteq H$ be a subgroup. We check that $\psi(A^L)=p_e(A\rtimes_\beta H\rtimes_{\widehat{\beta}}L^\perp)p_e$, where we view $A\rtimes_\beta H\rtimes_{\widehat{\beta}}L^\perp$ as a subalgebra of 
$A\rtimes_\beta H\rtimes_{\widehat{\beta}}\widehat{H}$ in the canonical way. Recall from Lemma \ref{lem-fixed}, applied to $\widehat{\beta}$ via the isomorphism $H\cong \widehat{\widehat{H}}$, which sends $g\in H$ to $\big(x\mapsto \langle g,x\rangle\big)\in \widehat{\widehat{H}}$, that 
$$A\rtimes_\beta H\rtimes_{\widehat{\beta}}L^\perp=(A\rtimes_\beta H\rtimes_{\widehat{\beta}}\widehat{H})^{L,\widehat{\widehat{\beta}}}.$$
Since $p_e\in A\rtimes_\beta H$ is fixed by $\widehat{\widehat{\beta}}$, we see that $\widehat{\widehat{\beta}}$ restricts to an action 
on $p_e(A\rtimes_\beta H\rtimes_{\widehat{\beta}}\widehat{H})p_e$. So the result will follow if we can show that 
the isomorphism $\psi \colon A\to p_e(A\rtimes_\beta H\rtimes_{\widehat{\beta}}\widehat{H})p_e$ is $\beta$-$\widehat{\widehat\beta}$ equivariant. To this end observe first that for all $g\in H$ we have
$$\widehat{\widehat{\beta}}_g(q_e)=\frac{1}{ |{H}|} \sum_{x\in \widehat{H}} \overline{\langle g, x\rangle} \,  \widehat{u}_x= q_{g^{-1}}=u_g^*q_eu_g.$$
Using this, and the fact that $p_e$ is fixed by $\widehat{\widehat{\beta}}$ we get for all $a\in A$ and $g\in H$:
\begin{align*}
\widehat{\widehat{\beta}}_g(\psi(a))&\stackrel{(\ref{eq-psi})}{=}|H| \, \widehat{\widehat{\beta}}_g(p_eaq_ep_e)
=|H| \, p_ea\widehat{\widehat{\beta}}_g(q_e)p_e
=|H| \, p_ea u_g^*q_eu_gp_e\\
&=|H| \, p_eu_g^*\beta_g(a)q_eu_gp_e \overset{(*)}{=} |H| \, p_e\beta_g(a)q_ep_e=\psi(\beta_g(a)),
\end{align*}
where  at ($*$) we have used the fact that $p_eu_g^*=u_gp_e=p_e$ for all $g\in H$, which follows easily from the definition of $p_e$. 
This finishes the proof.
\end{proof}

\begin{lemma} \label{lem:compression}
 Let $B \subseteq A$ be a  unital inclusion of  \Cs s and let $p \in B$ be a projection. If $B \subseteq A$ is $C^*$-irreducible, then so is $pBp \subseteq pAp$. Conversely, if $p$ is full in $B$ and if $pBp\subseteq pAp$ is $C^*$-irreducible, then
 $B\subseteq A$ is $C^*$-irreducible as well. Moreover, in this case the assignment $D\mapsto pDp$ gives a bijective correspondence between the intermediate \Cs s of $B\subseteq A$ and those  of $pBp\subseteq pAp$.
\end{lemma}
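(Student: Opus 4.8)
The plan is to reduce both implications and the ``Moreover'' statement to a single bijection between intermediate \Cs s that preserves simplicity, exploiting the standard fact that a \emph{full} corner is Morita equivalent to the ambient algebra and therefore has an isomorphic lattice of closed ideals. Recall that $C^*$-irreducibility of $B \subseteq A$ means exactly that every intermediate \Cs{} is simple. The first observation I would record is that if $p$ is full in $B$ and $B \subseteq D \subseteq A$ is any intermediate \Cs, then $p$ is automatically full in $D$ as well: the closed ideal of $D$ generated by $p$ contains the closed ideal of $B$ generated by $p$, which is $B \ni 1$, and a closed ideal containing the unit is everything. Consequently $pDp$ is a full corner of $D$, so $D$ is simple if and only if $pDp$ is simple. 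Throughout I may assume $p \neq 0$, the case $p = 0$ being degenerate.

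Next I would build the correspondence. In one direction I send an intermediate \Cs{} $D$ of $B \subseteq A$ to the corner $pDp$, which satisfies $pBp \subseteq pDp \subseteq pAp$. In the other direction, given an intermediate \Cs{} $E$ with $pBp \subseteq E \subseteq pAp$, I set $\Phi(E) := \cspan(BEB) \subseteq A$. The algebraic point that makes $\Phi(E)$ a \Cs{} is that elements of $E$ absorb $p$ on both sides, so that $ebe' = e(pbp)e' \in E\cdot E\cdot E \subseteq E$ for $b \in B$, i.e.\ $EBE \subseteq E$; this gives closure under multiplication, and self-adjointness is clear. That $B \subseteq \Phi(E)$ follows from fullness of $p$ in $B$, namely $\cspan(BpB) = B$, together with $p \in pBp \subseteq E$. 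Hence $\Phi(E)$ is an intermediate \Cs{} of $B \subseteq A$, and $p$ is full in it.

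I would then check that the two assignments are mutually inverse. The identity $p\,\Phi(E)\,p = E$ is immediate from $p(BEB)p = (pBp)E(pBp) \subseteq E$ and from $E = pEp \subseteq \Phi(E)$. The reverse identity $\Phi(pDp) = D$ is the heart of the matter and is where fullness is used in earnest: the inclusion $\Phi(pDp) \subseteq D$ is clear since $B \subseteq D$ and $pDp \subseteq D$, while for $D \subseteq \Phi(pDp)$ one approximates $1$ in norm by finite sums $\sum_i b_i p b_i'$, available because $\cspan(BpB)=B$, and writes, for $d \in D$, the approximation $d \approx \sum_{i,j} b_i\,(p b_i' d b_j p)\,b_j'$ with each $p b_i' d b_j p \in pDp$, exhibiting $d$ as a limit of elements of $\cspan(B(pDp)B) = \Phi(pDp)$. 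This establishes the claimed bijection, which is the ``Moreover'' assertion.

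Finally I would transfer $C^*$-irreducibility across the bijection. For the forward implication, $C^*$-irreducibility of $B \subseteq A$ forces $B$ (and $A$) to be simple, so the nonzero projection $p$ is full in $B$ and the construction above applies; then for any intermediate $E$ of $pBp \subseteq pAp$ the algebra $\Phi(E)$ is simple by hypothesis, whence $E = p\Phi(E)p$, being a full corner of a simple algebra, is simple. For the converse, assuming $p$ full in $B$ and $pBp \subseteq pAp$ $C^*$-irreducible, any intermediate $D$ of $B \subseteq A$ has $pDp$ intermediate in $pBp \subseteq pAp$, hence simple; since $p$ is full in $D$, simplicity of the full corner $pDp$ lifts to simplicity of $D$. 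The only genuinely delicate step is the reconstruction identity $\Phi(pDp) = D$, i.e.\ recovering $D$ from its corner; everything else is either formal or a direct appeal to the ideal-lattice isomorphism for full corners.
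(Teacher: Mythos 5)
Your proof is correct and takes essentially the same route as the paper: your inverse assignment $\Phi(E)=\cspan(BEB)$ coincides, under fullness of $p$, with the paper's $C\mapsto C^*(B\cup C)$, and your reconstruction $d\approx\sum_{i,j}b_i\,(pb_i'db_jp)\,b_j'$ is the approximate version of the paper's exact identity obtained from $1=\sum_j b_j^*pb_j$. The only cosmetic difference is in the first implication, where the paper argues directly for an arbitrary (not necessarily full) projection $p$ via $D:=C^*(B\cup C)$ together with the fact that corners of simple $C^*$-algebras are simple, whereas you first derive fullness of $p$ from simplicity of $B$; both arguments are valid.
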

\begin{proof} Assume first that $B\subseteq A$ is $C^*$-irreducible.
Let $pBp\subseteq C\subseteq pAp$ be an intermediate \Cs, and set $D=C^*(B\cup C)$. 
Then $B\subseteq D\subseteq A$, so $D$ is simple. Moreover, $C=pDp$, so $C$ is a corner of the simple $C^*$-algebra $D$, and is hence simple as well.

Suppose now that $p$ is full and that $pBp\subseteq pAp$ is $C^*$-irreducible. If $B\subseteq D\subseteq A$ is any intermediate 
$C^*$-algebra, then $pBp\subseteq pDp\subseteq pAp$, and hence $pDp$ is simple. Since $p$ is full in $B$, it follows that $p$ is also full in $D$, and this implies that $D$ is simple.

As for the last claim, we remarked above that the assignment $C \mapsto C^*(B \cup C)$ gives a map from intermediate \Cs s of the inclusion $pBp \subseteq pAp$ to intermediate \Cs s of the inclusion $B \subseteq A$, which is a left-inverse of the assignment $D \mapsto pDp$, i.e., $pC^*(B\cup C)p = C$, for any $pBp \subseteq C \subseteq pAp$. If $p$ is full in $B$, then it is also a right-inverse, i.e., $D = C^*(B\cup pDp)$, for any $B \subseteq D \subseteq A$. Indeed, $1= 1_B = \sum_{j=1}^n b_j^*pb_j$, for some $b_1, \dots, b_n \in B$ by fullness of $p$ in $B$. Hence, for each $d \in D$, we have $d = 1 \! \cdot \! d \! \cdot \! 1 = \sum_{i,j=1}^n b_i^*pb_idb_jpb_j^*$, which belongs to $C^*(B\cup pDp)$, since $pb_idb_jp \in pDp$, for all $i,j$. 
\end{proof}

\noindent
We are now ready to give a Galois type classification of the intermediate subalgebras of (some of) the inclusion $A^{H,\beta} \subseteq A \rtimes_{\alpha,r} G$ considered in Theorem~\ref{thm:fixed}.

\begin{theorem}\label{thm-intermediate} Suppose that $\alpha \colon G \to \Aut(A)$ and $\beta \colon H \to \Aut(A)$ are commuting actions of a discrete group $G$ and a finite abelian group $H$ on a unital simple \Cs{} $A$. 
\begin{enumerate}
\item The inclusion $A^{H,\beta} \subseteq A \rtimes_{\alpha,r} G$ is isomorphic to the inclusion
\begin{equation} \label{eq:G}
 p^\beta(A \rtimes_\beta H)p^\beta \subseteq  p^\beta(A \rtimes_\beta H \rtimes_{\widehat{\beta}} \widehat{H} \rtimes_{{\tilde{\alpha}},r} G)p^\beta
 \end{equation}
 where $p^\beta$ is as defined in \eqref{eq:p-beta}, and where $\tilde{\alpha} \colon G \to \Aut(A \rtimes_\beta H \rtimes_{\widehat{\beta}} \widehat{H})$ is the extension of $\alpha$, cf.\ the explanation above Lemma~\ref{lem-crossed}. 
\item There is a one-to-one correspondence between subgroups $L\subseteq \widehat{H}\times G$ and intermediate algebras of the inclusion in \eqref{eq:G} given by sending $L$ to
$p^{\beta}(A\rtimes_\beta H)p^{\beta}\rtimes_{\widehat{\beta}\times\widetilde{\alpha},r} L = p^{\beta}(A\rtimes_\beta H \rtimes_{\widehat{\beta}\times\widetilde{\alpha},r} L)p^{\beta}$.
\item There is a one-to-one correspondence between subgroups  of $\widehat{H}\times G$ and intermediate algebras of the inclusion $A^{H,\beta} \subseteq A \rtimes_{\alpha,r} G$.

In particular, if  $L=L_1\times L_2$ is a product of subgroups $L_1\subseteq \widehat{H}$ and $L_2\subseteq G$, then the corresponding intermediate algebra $A^{H,\beta} \subseteq D \subseteq A \rtimes_{\alpha,r} G$ is $D=A^{{L_1^\perp},\beta}\rtimes_{\alpha,r}L_2$, with $L_1^\perp$ the annihilator of $L_1$ in $H$, cf.\ \eqref{eq:L-hat}.
\end{enumerate}
\end{theorem}
\begin{proof} (i). It was shown in Proposition~\ref{prop:morita} that the inclusion $A^H \subseteq A$ is isomorphic to the inclusion $p^\beta(A \rtimes_\beta H)p^\beta \subseteq 
p^\beta(A \rtimes_\beta H \rtimes_{\widehat{\beta}} \widehat{H})p^\beta$ via the
  $^*$-isomorphism 
$$\psi \colon A \to p^\beta(A \rtimes_\beta H \rtimes_{\widehat{\beta}} \widehat{H})p^\beta,$$
defined in \eqref{eq-psi},
that maps $A^H$ onto  $p^\beta(A \rtimes_\beta H)p^\beta$.   The isomorphism $\psi$ is easily seen to be $\alpha$-$\tilde{\alpha}$ equivariant. Hence it
extends naturally to a $^*$-isomorphism $\bar{\psi} \colon A \rtimes_{\alpha,r} G \to p^\beta(A \rtimes_\beta H \rtimes_{\widehat{\beta}} \widehat{H} )p^\beta \rtimes_{{\tilde{\alpha}},r} G$. The algebra $p^\beta(A \rtimes_\beta H \rtimes_{\widehat{\beta}} \widehat{H} )p^\beta \rtimes_{{\tilde{\alpha}},r} G$ is equal to $p^\beta(A \rtimes_\beta H \rtimes_{\widehat{\beta}} \widehat{H} \rtimes_{{\tilde{\alpha}},r} G)p^\beta$ because ${\tilde{\alpha}}_g(p^\beta) = p^\beta$ for all $g \in G$ by the definition of $\tilde{\alpha}$. The $^*$-isomorphism $\bar{\psi}$ therefore implements the desired isomorphism of the two inclusions. 

(ii).
Since $A^H \subseteq A \rtimes_{\alpha,r} G$ is $C^*$-irreducible by Theorem \ref{thm:fixed}, so is the inclusion in \eqref{eq:G}, and hence so is the inclusion 
\begin{equation} \label{eq:H}
A \rtimes_\beta H \subseteq A \rtimes_\beta H \rtimes_{\widehat{\beta}} \widehat{H} \rtimes_{{\tilde{\alpha}},r} G =
A \rtimes_\beta H \rtimes_{\widehat{\beta} \times \tilde{\alpha},r} (\widehat{H} \times  G),
\end{equation}
by Lemma~\ref{lem:compression}. It follows from \cite[Theorem 5.8]{Rordam} that $\widehat{\beta} \times \tilde{\alpha} \colon \widehat{H} \times G \to \Aut(A \rtimes_\beta H)$ is outer. 

By Lemma~\ref{lem:compression} there is a bijective correspondence between intermediate \Cs s of the inclusion in \eqref{eq:H} and intermediate \Cs s of the inclusion in \eqref{eq:G} given by  compression with $p^\beta$. Finally, by the Cameron--Smith theorem, \cite[Theorem 3.5]{Cameron-Smith}, which applies because $\widehat{\beta} \times \tilde{\alpha}$ is outer, each intermediate \Cs{} of the  inclusion in \eqref{eq:H} is of the form $$(A \rtimes_\beta H) \rtimes_{\widehat{\beta} \times {\tilde{\alpha}},r} L,$$ for some subgroup $L$ of $ \widehat{H} \times G$. This proves (ii).

(iii) follows from (i) and (ii) and, for the last claim, inspection of the isomorphism $\psi$ which implements the isomorphism of the two inclusions in (i).
\end{proof}

\section{Examples} \label{sec:examples}

\noindent
In this section we want to discuss some interesting examples of the theory as developed in the previous sections arising from group actions on the irrational rotation algebra $A_\theta$, for $\theta\in \RR\setminus \QQ$.
Recall that  $A_\theta$ is the universal 
  $C^*$-algebra generated by two unitaries $u,v$ subject to the relation
  $$vu= e^{2\pi i\theta}uv.$$
  There is an outer action   $\alpha\colon  \SL(2,\ZZ)\to \Aut(A_\theta)$ for which
  $$n=\begin{pmatrix} n_{11}& n_{12}\\ n_{21}& n_{22}\end{pmatrix} \in \SL(2,\ZZ)$$
 acts on the generators $u$, $v$ of $A_\theta$ by
  $$\alpha_n(u)= e^{2\pi i n_{11}n_{21}\theta}u^{n_{11}}v^{n_{21}}, \qquad
  \alpha_n(v)= e^{2\pi i n_{12}n_{22}\theta}u^{n_{12}}v^{n_{22}}.$$
  Up to conjugacy, there are exactly four different finite cyclic subgroups of $\SL(2,\ZZ)$ isomorphic to
the cyclic groups $\ZZ_2, \ZZ_3, \ZZ_4$, and $\ZZ_6$, generated, in that order, by the elements:
  \begin{equation}\label{eq-gen}
\begin{split}
\begin{pmatrix} -1 & 0 \\ 0 & -1 \end{pmatrix}, \quad \begin{pmatrix} 0 & 1 \\ -1 & -1 \end{pmatrix}, \quad 
 \begin{pmatrix} 0 & -1 \\ 1 & 0 \end{pmatrix}, \quad \begin{pmatrix} 0 & -1 \\ 1 & 1 \end{pmatrix}.
\end{split}
\end{equation}

The resulting crossed products $A_\theta \rtimes_\alpha \ZZ_k$, $k=2,3,4,6$, have been studied in detail in \cite{ELPW}, where it has been shown that they as well as the fixed-point algebras $A_\theta^{\ZZ_k}$, $k=2,3,4,6$, are simple AF-algebras. 
By \cite[Theorem 5.8]{Rordam}, all inclusions $A_\theta\subseteq A_\theta\rtimes_\alpha \ZZ_k$ are $C^*$-irreducible, and it follows from 
Theorem~\ref{thm-Izumi} (Izumi) that the inclusions $A_\theta^{\ZZ_k}\subseteq A_\theta$ are $C^*$-irreducible as well.
Thus we see that every $A_\theta$, with $\theta$ irrational, has a unital $C^*$-irreducible inclusion into some simple AF-algebra, and that, on the other hand, there always exist simple AF-algebras which admit a unital  $C^*$-irreducible embedding into $A_\theta$. 
But note that the composition $A_\theta^{\ZZ_k}\subseteq A_{\theta}\rtimes_\alpha \ZZ_k$ of these inclusions 
is not $C^*$-irreducible, since $(A_\theta^{G})'\cap(A_\theta\rtimes_\alpha G)\neq \CC$, as observed earlier for general actions $\alpha \colon G\to\Aut(A)$ of a finite group $G$. On the other hand, since the entire group $\SL(2,\ZZ)$ acts by outer automorphisms on $A_\theta$, condition 
(iii) of Theorem \ref{thm:fixed} is satisfied for the actions of two subgroups $F_1, F_2\subseteq \SL(2,\ZZ)$ on $A_\theta$ if and only if their intersection $F_1\cap F_2$ is trivial in $\SL(2,\ZZ)$. We therefore get:

\begin{proposition}\label{prop-ex}
Suppose that $(F_1, F_2)$ is either one of the pairs 
$$(\ZZ_2, \ZZ_3),\qquad (\ZZ_3, \ZZ_4), \qquad  (\ZZ_3, \widetilde{\ZZ}_3),$$
where $\widetilde{\ZZ}_3:=\langle R\rangle$ for some matrix $R\in \SL(2,\ZZ)$ which is a conjugate of the matrix $\left(\begin{smallmatrix} 0 & 1 \\ -1 & -1 \end{smallmatrix}\right)$  inside $\SL(2,\ZZ)$ and for which $\ZZ_3\cap \widetilde{\ZZ}_3=1$.\footnote{One can for example take $R=\left(\begin{smallmatrix} -2 & 1 \\ -3 & 1 \end{smallmatrix}\right) = S\left(\begin{smallmatrix} 0 & 1 \\ -1 & -1 \end{smallmatrix}\right)S^{-1}$, with $S = \left(\begin{smallmatrix} 1 & 1 \\ 1 & 2 \end{smallmatrix}\right)$.}
Then
$$A_\theta^{F_1}\subseteq A_\theta\rtimes F_2, \qquad A_\theta^{F_2}\subseteq A_\theta\rtimes F_1$$
are $C^*$-irreducible inclusions of AF-algebras.
\end{proposition}
\begin{proof} In all these cases we have $F_1\cap F_2=1$ in $\SL(2,\ZZ)$, so the result follows from Theorem \ref{thm:fixed}.
\end{proof}

\noindent 
Among  the  finite subgroups of $\SL(2,\ZZ)$ listed in and above \eqref{eq-gen}, the pairs $(F_1, F_2)$ listed in the proposition above are the only ones 
which satisfy item (iii) of Theorem \ref{thm:fixed}, so any other combination of subgroups $(F_1,F_2)$ will not provide $C^*$-irreducible inclusions. 

Since $A_\theta$ is not an AF-algebra, Proposition \ref{prop-ex} leads (as expected) to a negative answer to \cite[Question 6.11]{Rordam}:

\begin{corollary} \label{cor:AF-non-AF} There exist $C^*$-irreducible inclusions of AF-algebras with intermediate \Cs s that are not AF-algebras.
\end{corollary}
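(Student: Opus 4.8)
The plan is to read the corollary off directly from the two sequences of inclusions displayed in \eqref{eq:AF-non-AF}, since essentially all the work has already been done. I would focus on the first of the two, namely the inclusion $A_\theta^{\ZZ_2,\beta}\subseteq A_\theta\rtimes_\alpha \ZZ_3$ (the second being handled symmetrically). By the results of \cite{ELPW} quoted above, both the fixed-point algebra $A_\theta^{\ZZ_2,\beta}$ and the crossed product $A_\theta\rtimes_\alpha \ZZ_3$ are simple AF-algebras, so this is genuinely an inclusion of AF-algebras. Its $C^*$-irreducibility has already been established above, using Theorem~\ref{thm:fixed} together with the outerness of $\beta\times\alpha\colon \ZZ_2\times\ZZ_3\to\Aut(A_\theta)$.

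First I would point out that $A_\theta$ sits as an intermediate \Cs{} of this inclusion, exactly as recorded in \eqref{eq:AF-non-AF}. It then remains only to argue that $A_\theta$ is not an AF-algebra, and for this I would invoke the standard $K$-theoretic obstruction: every AF-algebra, being an inductive limit of finite-dimensional \Cs s, has vanishing $K_1$-group, whereas the irrational rotation algebra satisfies $K_1(A_\theta)\cong\ZZ^2\neq 0$. Hence $A_\theta$ cannot be AF, and $A_\theta^{\ZZ_2,\beta}\subseteq A_\theta\rtimes_\alpha \ZZ_3$ is a $C^*$-irreducible inclusion of AF-algebras admitting a non-AF intermediate \Cs, as required.

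There is no genuine obstacle here: the entire construction has been assembled in the preceding paragraphs, and the only input beyond the cited results is the elementary fact that a nonvanishing $K_1$-group obstructs being AF. The corollary is thus immediate from the examples already in hand.
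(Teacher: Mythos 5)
Your proposal is correct and takes essentially the same route as the paper: the corollary is read off directly from the inclusions in \eqref{eq:AF-non-AF}, whose endpoints are simple AF-algebras by \cite{ELPW} and whose $C^*$-irreducibility follows from Theorem~\ref{thm:fixed} applied to the outer action $\beta\times\alpha$ of $\ZZ_2\times\ZZ_3$. The paper likewise rules out $A_\theta$ being AF via $K_1(A_\theta)=\ZZ^2\neq\{0\}$ (stated explicitly in the proof of Corollary~\ref{cor-inter}), so nothing is missing from your argument.
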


\noindent Of the three pairs of groups $(F_1,F_2)$ in Proposition~\ref{prop-ex} above, only the pair $(\ZZ_2,\ZZ_3)$
satisfies the additional assumptions of Theorem \ref{thm-intermediate} which gives a classification of the intermediate \Cs s.  This pair also satisfies the conditions of the following:

\begin{proposition}\label{prop-inter}
Suppose that $H$ and $G$ are finite cyclic groups of prime order $p$ and $q$, respectively, such that $p\neq q$.
Let $\alpha\times\beta:G\times H\to \Aut(A)$ be an outer action on the simple unital $C^*$-algebra $A$. 

Then $A^{H,\beta}\subseteq A\rtimes_\alpha G$ is a $C^*$-irreducible inclusion, and  
$A$ and $A^{H,\beta}\rtimes_\alpha G$ are the only (strict) intermediate  \Cs s for this inclusion.
\end{proposition}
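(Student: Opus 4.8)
The plan is to read off both assertions directly from Theorem~\ref{thm:fixed} and Remark~\ref{rem-intermediate}, the only genuine input being an elementary count of the subgroups of a cyclic group of order $pq$.

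First, the $C^*$-irreducibility of $A^{H,\beta} \subseteq A \rtimes_\alpha G$ is immediate. Since $\alpha \times \beta$ is outer, so is $\beta$ (as recorded in the note following Theorem~\ref{thm:fixed}); moreover $H$ is finite abelian, so Theorem~\ref{thm:fixed}(ii) applies. Its condition~(c) holds by hypothesis, and hence so does~(a), which is exactly the asserted $C^*$-irreducibility.

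To determine the intermediate algebras I would invoke Remark~\ref{rem-intermediate}, valid here because~(c) holds, which supplies a bijection between the intermediate $C^*$-algebras of $A^{H,\beta} \subseteq A \rtimes_\alpha G$ and the subgroups of $\widehat{H} \times G$. As $H \cong \ZZ_p$ we have $\widehat{H} \cong \ZZ_p$, and with $G \cong \ZZ_q$ and $p \neq q$ prime the group $\widehat{H} \times G \cong \ZZ_p \times \ZZ_q$ is cyclic of order $pq$. The one combinatorial point is that such a group has exactly four subgroups: the trivial subgroup, the unique subgroup $\widehat{H} \times \{e\}$ of order $p$, the unique subgroup $\{e\} \times G$ of order $q$, and the whole group. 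Each of these is rectangular, of the form $L_1 \times L_2$: any element with both coordinates nontrivial has order $pq$ and generates everything, so every proper nontrivial subgroup lies in one of the two factors.

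Finally I would feed these four subgroups into the formula $L_1 \times L_2 \mapsto A^{L_1^\perp,\beta} \rtimes_\alpha L_2$ of Remark~\ref{rem-intermediate}, using that $\{e\}^\perp = H$ and $\widehat{H}^\perp = \{e\}$ for the annihilators in $H$. This yields the four intermediate algebras
$$A^{H,\beta}, \qquad A, \qquad A^{H,\beta} \rtimes_\alpha G, \qquad A \rtimes_\alpha G,$$
corresponding respectively to $\{e\}$, $\widehat{H} \times \{e\}$, $\{e\} \times G$, and $\widehat{H} \times G$. The first and last are the two endpoints of the inclusion, so the only strict intermediate $C^*$-algebras are $A$ and $A^{H,\beta} \rtimes_\alpha G$, as claimed. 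There is no serious obstacle: the coprimality $p \neq q$ enters only through the subgroup enumeration, and the only step requiring a little care is matching each subgroup to its intermediate algebra via the annihilator correspondence.
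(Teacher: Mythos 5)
Your proof is correct and takes essentially the same route as the paper's: both deduce $C^*$-irreducibility from Theorem~\ref{thm:fixed} (via outerness of $\alpha\times\beta$) and then apply the bijection of Remark~\ref{rem-intermediate}, using that $\widehat{H}\times G\cong \ZZ_p\times\ZZ_q$ has exactly the four evident subgroups $\{e\}$, $\widehat{H}\times\{e\}$, $\{e\}\times G$ and $\widehat{H}\times G$, all rectangular. Your explicit subgroup enumeration and annihilator bookkeeping merely spell out details the paper leaves implicit, and they are accurate.
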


\begin{proof} 
Since  finite cyclic groups are self-dual, it follows from the assumption on the pair  $p,q$ that 
$\widehat{H}\cong \widehat{H}\times\{e\}$ and $G\cong \{e\}\times G$ are the only non-trivial subgroups of $\widehat{H}\times G$.
Thus it follows from Theorem \ref{thm-intermediate} that $A=A^{\widehat{H}^{\perp},\beta}$ and 
$A^{H,\beta}\rtimes_\alpha G=A^{\{e\}^\perp,\beta}\rtimes_\alpha G$ are the only strict intermediate $C^*$-algebras for the inclusion 
$A^{H,\beta}\subseteq A\rtimes_\alpha G$.
\end{proof}

\begin{corollary}\label{cor-inter}
Let $\theta\in \RR\setminus\QQ$. The only strict intermediate $C^*$-algebras for the $C^*$-irreducible inclusion 
$A_\theta^{\ZZ_2,\alpha}\subseteq A_\theta\rtimes_\beta \ZZ_3$ are $A_\theta$ and $A_\theta^{\ZZ_2, \alpha}\rtimes_\beta\ZZ_3$.

Similarly, the only strict intermediate $C^*$-algebras for the $C^*$-irreducible inclusion 
$A_\theta^{\ZZ_3,\beta}\subseteq A_\theta\rtimes_\alpha \ZZ_2$ are $A_\theta$ and $A_\theta^{\ZZ_3, \beta}\rtimes_\alpha\ZZ_2$.
\end{corollary}

\noindent
Note that the intermediate algebras 
 $A_\theta^{\ZZ_2, \alpha}\rtimes_\beta\ZZ_3$ and  $A_\theta^{\ZZ_3, \beta}\rtimes_\alpha\ZZ_2$ are AF algebras.
Indeed,  it is shown in \cite{ELPW} that 
 $A_\theta\rtimes_\gamma \ZZ_6=A_\theta\rtimes_{\alpha\times\beta}(\ZZ_2\times \ZZ_3)$ 
is an  AF-algebra.
By Lemma \ref{lem-crossed} together with Rosenberg's isomorphism (\ref{eq:Rosenberg})
 it follows  that 
$$A_\theta^{\ZZ_2, \alpha}\rtimes_\beta\ZZ_3=(A_\theta\rtimes_\beta \ZZ_3)^{\ZZ_2,\alpha}$$
is a (full) corner of  $A_\theta\rtimes_\beta\ZZ_3\rtimes_{\tilde\alpha}\ZZ_2\cong A_\theta\rtimes_\gamma \ZZ_6$, and similarly for 
$A_\theta^{\ZZ_3, \beta}\rtimes_\alpha\ZZ_2$. Since corners of AF-algebras are AF, it follows that
$A_\theta^{\ZZ_2, \alpha}\rtimes_\beta\ZZ_3$ and $A_\theta^{\ZZ_3, \beta}\rtimes_\alpha\ZZ_2$ are AF-algebras.

\begin{remark}\label{rem-intermediate}
It would be very interesting also to understand the intermediate $C^*$-algebras of the  inclusions 
appearing in Proposition \ref{prop-ex}, other than the ones arising from the pair $(\ZZ_2,\ZZ_3)$.

Perhaps, the most interesting case is given by the inclusion  
 $A_\theta^{\ZZ_3}\subseteq A_\theta\rtimes \widetilde{\ZZ}_3$. 
 The only obvious intermediate \Cs{} here is $A_\theta$ itself, and it might well be that it is the only one. (By an ``obvious'' intermediate \Cs{} of an inclusion $A^H \subseteq A \rtimes_r G$, we think here of one of the form $D \rtimes_{r,\alpha} L$, where $L$ is a subgroup of $G$ and $D$ is an $L$-invariant intermediate algebra $A^{H} \subseteq D \subseteq A$.)
 If that would be true it would give us an example of a $C^*$-irreducible inclusion of two AF algebras with $A_\theta$ as the unique intermediate $C^*$-algebra. 
 
Since $\widetilde{\ZZ}_3$ is a conjugate of $\ZZ_3$ by an element of $\SL(2,\ZZ)$, the crossed product
 $A_\theta\rtimes \widetilde{\ZZ}_3$ is canonically isomorphic to the crossed product $A_\theta\rtimes {\ZZ}_3$ in which 
 $A_\theta^{\ZZ_3}$ sits as a full corner. In particular, $A_\theta^{\ZZ_3}$ and $A_\theta\rtimes \widetilde{\ZZ}_3$ are Morita equivalent AF-algebras.
 \end{remark}

\noindent
\subsection*{Actions by infinite cyclic groups.} 
Actions  on $A_\theta$ can provide further  examples of $C^*$-irreducible inclusions with interesting properties. 
For this let us consider actions of $\ZZ$ on $A_\theta$ which are given by restrictions of the action of $\SL(2,\ZZ)$ to 
infinite cyclic subgroups. These are generated by matrices $S\in \SL(2,\ZZ)$ of infinite order. Let us then write $\alpha^S$ 
for the corresponding action of $\ZZ$ an $A_\theta$. The crossed products 
$A_\theta\rtimes_{\alpha^S}\ZZ$ have been studied and classified in \cite{BCHL}. A particularly interesting example occurs 
if $\tr(S)=3$, e.g., for $S=\left(\begin{smallmatrix} 1&1\\1&2\end{smallmatrix}\right)$. 
In this case, the classification results of \cite{BCHL}  imply that $A_\theta\rtimes_{\alpha^S}\ZZ$ is actually isomorphic to $A_\theta$ itself.
Thus by \cite[Theorem 5.8]{Rordam} and \cite{Cameron-Smith} we obtain a proper $C^*$-irreducible inclusion 
$$A_\theta\subseteq A_\theta\rtimes_{\alpha^S}\ZZ\cong A_\theta.$$
By the results of Cameron and Smith in \cite[Theorem 3.5]{Cameron-Smith}, all (strict) intermediate $C^*$-algebras are of the form
$$A_\theta\rtimes_{\alpha^S}(n\ZZ)= A_{\theta}\rtimes_{\alpha^{S^n}}\ZZ, \quad n=2,3,4,\ldots.$$
 Using the results of \cite[Theorem 3.5]{BCHL}, all these intermediate algebras can be classified by their Elliott invariants,
and it turns out that they are never AF (since by \cite[Theorem 3.5]{BCHL} their $K_1$-groups never vanish) and they are usually not isomorphic to $A_\theta$.

\begin{example}\label{ex-tr3}
Let us look again at the matrix $S=\left(\begin{smallmatrix} 1&1\\1&2\end{smallmatrix}\right)$. 
Then $S$ is self-adjoint with $\tr(S)=3$. The entries of the powers of $S$ are Fibonnaci numbers: 
$$S^n = \begin{pmatrix} f_{2n-1}&f_{2n}\\f_{2n}&f_{2n+1}\end{pmatrix}, \qquad n \ge 1.$$ 
In particular,  it follows that $\tr(S^n)> 3$, for all $n\geq 2$, and hence it follows from 
 \cite[Theorems 3.5 and 3.9]{BCHL}  that the intermediate algebras 
$A_\theta\rtimes_{\alpha^{S^n}}\ZZ$ of the inclusion $A_\theta\subseteq A_{\theta}\rtimes_{\alpha^S}\ZZ\cong A_\theta$ 
are never isomorphic to $A_\theta$ and are not even irrational rotation algebras.

Indeed, using \cite[Remark 3.12]{BCHL}, we can conclude that $A_\theta\rtimes_{\alpha^{S^n}}\ZZ$ and $A_\theta\rtimes_{\alpha^{S^m}}\ZZ$ are never isomorphic if $n\neq m$, since we have
$|2-\tr(S^n)|\neq |2-\tr(S^m)|$, whenever $n,m\in \NN$ with $n\neq m$.
\end{example}

\begin{remark}
For any element $S\in \SL(2,\ZZ)$ of infinite order, the intersection $\langle S\rangle \cap F$ is trivial for any finite subgroup $F\subseteq \SL(2,\ZZ)$.
Therefore, with $S=\left(\begin{smallmatrix} 1&1\\1&2\end{smallmatrix}\right)$ as above, we get $C^*$-irreducible inclusions
$$A_\theta^F\subseteq A_\theta\rtimes_{\alpha^S}\ZZ\cong A_\theta$$
for every such subgroup $F$. In the case where $F=\ZZ_2$, which is  generated by the central element $\left(\begin{smallmatrix} -1&0\\0&-1\end{smallmatrix}\right)$, the 
actions of $F$ and $\ZZ$ commute and Theorem \ref{thm-intermediate}  gives  a description of all intermediate algebras for this inclusion. 
\end{remark}

\noindent
Another interesting consequence of this type of examples is the existence of outer actions $\beta^n$ of the cyclic groups $\ZZ_n$ on 
$A_\theta$, for all $n\in \NN$ with $n\geq 2$, such that the crossed products $A_{\theta}\rtimes_{\beta^n}\ZZ_n$ as well as the fixed-point algebras $A_\theta^{\ZZ_n,\beta^n}$ are not AF, quite contrary to the case of the actions of the finite subgroups of $ \SL(2,\ZZ)$ considered before.  For this we need

\begin{lemma}\label{lem-outer-dual}
  Suppose that $\beta \colon H\to \Aut(A)$ is an outer action of the discrete abelian group $H$ on a simple $C^*$-algebra $A$. Then, for each finite subgroup $M\subseteq \widehat{H}$,  the restriction of the dual action $\widehat{\beta} \colon \widehat{H}\to\Aut(A\rtimes_\beta H)$ to $M$ is outer as well. 
  \end{lemma}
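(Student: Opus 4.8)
The plan is to prove the (formally stronger and cleaner) statement that $\widehat{\beta}_x$ is outer for \emph{every} $x \in \widehat{H} \setminus \{e\}$; the assertion for a finite subgroup $M$ is then immediate, and the finiteness of $M$ will in fact play no role in the argument. Throughout I write $B := A \rtimes_\beta H$, which is simple by Kishimoto's theorem since $\beta$ is outer and $A$ is simple, and I recall that the dual action $\widehat{\beta}$ extends strictly continuously to an action of $\widehat{H}$ on $M(B)$.

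So suppose, towards a contradiction, that $\widehat{\beta}_x = \Ad w$ for some $x \neq e$ and some unitary $w \in M(B)$. Since $\widehat{\beta}_x$ fixes $A$ pointwise, the first observation is that $w \in A' \cap M(B)$, and the whole proof amounts to showing that this relative commutant is trivial. First I would exploit that $\widehat{H}$ is abelian: for every $y \in \widehat{H}$ the automorphisms $\widehat{\beta}_y$ and $\widehat{\beta}_x$ commute, so $\Ad \widehat{\beta}_y(w) = \widehat{\beta}_y \circ \Ad w \circ \widehat{\beta}_y^{-1} = \Ad w$, whence $\widehat{\beta}_y(w) w^*$ lies in the center $Z(M(B)) = \CC 1$ (using that $B$ is simple). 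Thus $\widehat{\beta}_y(w) = \chi(y)\, w$ for a scalar $\chi(y) \in \TT$, and $\chi$ is a continuous character of $\widehat{H}$, i.e. $\chi(y) = \overline{\langle h_0, y \rangle}$ for some $h_0 \in H$ by Pontryagin duality.

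Next I would peel off this character. Since $\widehat{\beta}_y(u_{h_0}) = \overline{\langle h_0,y\rangle}\, u_{h_0}$, the element $w u_{h_0}^*$ is fixed by the entire dual action, and the fixed-point algebra of $\widehat{\beta}$ on $M(B)$ is exactly $M(A)$: indeed a dual-invariant multiplier maps $A$ into $B^{\widehat{H}} = A$, the last equality being Lemma \ref{lem-fixed} with $L = \{e\}$. Hence $w = a_0 u_{h_0}$ with $a_0 \in M(A)$ unitary. Feeding this back into $w \in A'$, the relation $a_0 u_{h_0} a = a a_0 u_{h_0}$ for all $a \in A$ rearranges to $\beta_{h_0} = \Ad a_0^*$, so $\beta_{h_0}$ is inner; outerness of $\beta$ forces $h_0 = e$, and then $w = a_0 \in A' \cap M(A) = Z(M(A)) = \CC 1$. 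Consequently $\widehat{\beta}_x = \Ad w = \id$, which forces $\langle h, x \rangle = 1$ for all $h \in H$, that is $x = e$, the desired contradiction.

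I expect the only real obstacle to be the multiplier-algebra bookkeeping in the middle step: making precise the strictly continuous extension of $\widehat{\beta}$ to $M(B)$, the continuity of the scalar character $\chi$, and the identity $M(B)^{\widehat{H}} = M(A)$ (together with $Z(M(A)) = \CC 1$ for simple $A$). When $H$, and hence $\widehat{H}$, is finite — as in the intended application — all of these collapse to finite averages over $\widehat{H}$ and elementary algebra, so no genuine analysis is required; the only structural inputs are the simplicity of $B$ and the triviality of the relevant centers and fixed-point algebras.
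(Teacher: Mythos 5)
Your proof is correct, and it takes a genuinely different route from the paper's, in fact establishing a strictly stronger statement. The paper argues indirectly: for each subgroup $L\subseteq M$ it invokes the induction--restriction duality of \cite{Ech-indres}*{Proposition 2.1}, by which $(A\rtimes_\beta H)\rtimes_{\widehat{\beta}}L$ is Morita equivalent to $A\rtimes_\beta L^{\perp}$; the latter is simple by Lemma~\ref{lem-outer} (via Kishimoto), hence so is the former, and then the implication (iii)$\Rightarrow$(i) of Lemma~\ref{lem-outer}, applied to the finite (hence discrete) group $M$, yields outerness of $\widehat{\beta}|_M$. Finiteness of $M$ is genuinely used there, both to treat $M$ as a discrete group acting and to keep its subgroups closed in the compact dual. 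Your direct analysis of an implementing unitary --- $w\in A'\cap M(B)$, the eigenvalue character $\chi$ extracted from commutativity of $\widehat{\beta}$ together with $B'\cap M(B)=\CC$, the factorization $w=a_0u_{h_0}$ with $a_0\in M(A)$ via $M(B)^{\widehat{\beta}}=M(A)$, and the contradiction $\beta_{h_0}=\Ad a_0^*$ --- instead shows that $\widehat{\beta}_x$ is outer for \emph{every} $x\neq e$ in $\widehat{H}$, torsion or not, whereas the lemma as stated (see the paper's remark following it) only covers torsion elements. Each of your steps is sound: $B'\cap M(B)=\CC$ for simple $B$ is the same fact the paper uses for $A$ in the proof of Lemma~\ref{lem-outer}; $B^{\widehat{H}}=A$ is exactly Lemma~\ref{lem-fixed} with $L=\{e\}$; continuity of $\chi$ follows since the strict extension of $\widehat{\beta}$ to $M(B)$ is strictly continuous in the group variable; and $M(B)^{\widehat{\beta}}=M(A)$ follows from nondegeneracy (an approximate unit of $A$ is one for $B$, so a dual-invariant multiplier is determined by, and agrees with, its restriction to $A$). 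The trade-off: the paper's proof is shorter given its toolkit and recycles Lemma~\ref{lem-outer}, while yours is self-contained, avoids the external Morita-equivalence input, and delivers pointwise outerness of the whole dual action, so the conclusion holds for arbitrary, not just finite, subgroups $M\subseteq\widehat{H}$.
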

  
  \noindent If $\widehat{H}$ is finite, or more generally if $\widehat{H}$ has no element of infinite order, then the  lemma simply says that $\widehat{\beta}$ itself also is outer, cf.\ Lemma \ref{lem-outer}.
  
  \begin{proof}
  Let $L\subseteq M\subseteq \widehat{H}$ be any subgroup of $M$ and let $L^{\perp}$ be the annihilator 
   of $L$ in $H$. Then it follows from \cite[Proposition 2.1]{Ech-indres} that 
   $(A\rtimes_\beta H)\rtimes_{\widehat\beta} L$ is Morita equivalent to $A\rtimes_\beta L^{\perp}$, which is simple by Lemma \ref{lem-outer}. Thus, since  Morita equivalence preserves simplicity, the crossed product    $(A\rtimes_\beta H)\rtimes_{\widehat\beta} L$ 
   is simple as well. Thus, it follows from Lemma \ref{lem-outer} that the restriction of $\widehat\beta$ to $M$ is by outer automorphisms. 
      \end{proof}

\begin{example}\label{ex-finiteS}
Let $S=\left(\begin{smallmatrix} 1&1\\1&2\end{smallmatrix}\right)$ as above (for most of what we do here, one could take any $S\in  \SL(2,\ZZ)$ with $\tr(S)=3$).  
Consider the dual action $\widehat{\alpha}^S \colon \TT\to \Aut(A_\theta\rtimes_{\alpha^S}\ZZ)$  of $\alpha^S$. The isomorphism 
$A_\theta\rtimes_{\alpha^S}\ZZ\cong A_\theta$ carries this to an action, say $\beta \colon \TT\to \Aut(A_\theta)$.
For each $n\in \NN$, let us identify the cyclic group $\ZZ_n$ of order $n$ with the group of all $n$th roots of unity in $\TT$, 
which is the annihilator of $n\ZZ\subseteq \ZZ$ under the identification $\TT\cong \widehat{\ZZ}$. Thus 
 $\ZZ_n$ can be identified with $(n\ZZ)^{\perp}\subseteq \TT$. It follows from 
Lemma \ref{lem-outer-dual} that the restriction of $\beta$ to $\ZZ_n$ gives an outer action, called $\beta^n$ below,  of $\ZZ_n$ on $A_\theta$. Thus, using \cite[Theorem 5.8]{Rordam} and Theorem \ref{thm:fixed}, we obtain 
$C^*$-irreducible inclusions 
$$A_\theta^{\ZZ_n,\beta^n}\subseteq A_\theta\quad\text{and}\quad A_\theta \subseteq A_\theta\rtimes_{\beta^n}\ZZ_n$$
with intermediate algebras given by $A_\theta^{\ZZ_m,\beta^m}$ and $A_\theta\rtimes_{\beta^m} \ZZ_m$,   respectively,
for all $m\in \NN$ which divide $n$. It follows then from 
Lemma \ref{lem-fixed} that 
$$A_\theta^{\ZZ_m,\beta^m}\cong A_\theta\rtimes_{\alpha^{S^m}}\ZZ.$$
So at least for $S=\left(\begin{smallmatrix} 1&1\\1&2\end{smallmatrix}\right)$, it follows from Example \ref{ex-tr3} that the sequence of \Cs s above are pairwise non-isomorphic, and that none of them are AF-algebras. 

Note, if $n,m\in \NN$ have no common divisors, then $\ZZ_n\cap \ZZ_m=\{0\}$ and Theorem \ref{thm:fixed} implies that the inclusion 
$$A_\theta^{\ZZ_n,\beta^n}\subseteq A_\theta\rtimes_{\beta^m}\ZZ_m$$
is also $C^*$-irreducible. Again, in this case Theorem \ref{thm-intermediate} allows us to compute all intermediate algebras of this inclusion.
\end{example}

\begin{question}
Let $A_\theta\subseteq A_\theta\rtimes_{\alpha^S}\ZZ\cong A_\theta$ be the $C^*$-irreducible inclusion considered in Example~\ref{ex-finiteS} above. 
By iteration we get a chain of inclusions
$$A_\theta\subseteq A_{\theta}\subseteq\cdots\subseteq A_\theta \subseteq \cdots.$$
Are all compositions in this sequence $C^*$-irreducible?

It has been shown in \cite[Remark 3.11]{BCHL} that the direct  limit of this sequence is the AF-algebra 
constructed by Effros and Shen in \cite{ES}, and into which $A_\theta$ embeds with the same ordered $K_0$-groups, as shown by Pimsner and Voiculescu in \cite{PV}.
\end{question}

\end{document}